\newtheorem{theorem}{Theorem}
\newtheorem{lemma}{Lemma}
\newtheorem{corollary}{Corollary}
\newtheorem{proposition}{Proposition}
\newtheorem{conjecture}{Conjecture}
\begin{document}

\title{The sixth power moment of Dirichlet $L$-functions}
\author {J. B. Conrey, H.  Iwaniec,  \and K. Soundararajan}
\address{American Institute of Mathematics, 360 Portage Ave, Palo Alto, CA 94306, USA; 
Department of Mathematics, Bristol University, Bristol BS8 1SN, UK} 
\email{conrey@aimath.org}
\address{Department of Mathematics, Rutgers University, Piscataway, NJ 08903, USA}
\email{iwaniec@math.rutgers.edu}
\address{Department of Mathematics, Stanford University, Stanford, CA 94305, USA}
\email{ksound@math.stanford.edu}

\thanks{Research  supported in part by the American Institute of Mathematics and by
the NSF grants DMS-1101774, DMS-1101575, and DMS 1001068} \maketitle

\section{Introduction}

\def\sumflat{\sideset{}{^\flat}\sum}
 \def\sumstar{\sideset{}{^*}\sum}
\def\cbar{\overline{\chi}}
\def\Lam{\Lambda}
   
\noindent  The study of moments of the Riemann zeta-function and 
related $L$-functions has a long history tracing back to the 
classical work of Hardy and Littlewood.    Hardy and Littlewood established an 
asymptotic formula for the second moment $\int_0^T |\zeta(\tfrac 12+it)|^2 dt$, and 
Ingham established an asymptotic formula for the fourth moment $\int_0^{T} |\zeta(\tfrac 12+it)|^4 dt$.  
Finding asymptotic formulae for the sixth or higher moments of the zeta-function remains an 
outstanding open problem.  The story for moments 
of $L$-functions in families is similar, and in many cases a few small 
moments have been evaluated asymptotically.   In general it is difficult even to conjecture 
an asymptotic formula, and it is only in recent years that a satisfactory picture of the 
structure of moments has emerged.

The breakthrough came from the work of Keating and Snaith (\cite{KS1}, \cite{KS2}) who modeled moments 
of $L$-values by values of the characteristic polynomials of large random matrices drawn 
from appropriate classical compact groups.  The choice of the group was 
suggested by the work of Katz and Sarnak (\cite {KaSa}) on the symmetry types for the 
distribution of zeros of $L$-functions in families.    
In this way Keating and Snaith identified the leading order asymptotics for 
the $2k$-th moment of $\zeta(\tfrac 12+it)$, and their conjecture agreed with 
the results of Hardy and Littlewood, and Ingham for $k=1$ and $2$, and 
conjectures derived (using heuristics for shifted divisor problems) 
by Conrey and Ghosh (for $k=3$, \cite{CG}) and Conrey and Gonek (for $k=4$, \cite{CGo}).  
While the Keating-Snaith conjecture gives only the leading order 
term for the moments of $L$-functions, subsequent work by 
Conrey, Farmer, Keating, Rubinstein and Snaith (\cite {CFKRS}) has led to more precise 
conjecture for integer moments where the entire asymptotic 
expansion is identified.   An alternative approach, based on multiple Dirichlet series, 
and leading to the same conjectures was proposed by Diaconu, Goldfeld and Hoffstein 
\cite{DGH}.  
Towards these conjectures, in many situations we have a lower bound 
for moments of the conjectured order of magnitude (see \cite{RuS1}, \cite{RuS2}),  and assuming the 
truth of the Generalized Riemann Hypothesis we have a corresponding  
upper bound of almost the right order of magnitude (\cite{Sou1}).  Further, there is extensive 
numerical data  supporting the conjectures of  \cite{CFKRS}, and here 
it is important to know the full asymptotic expansion because in the range of computations 
asymptotically lower order terms still contribute significantly.  

 For the sixth moment of $\zeta(\tfrac 12+it)$, the Keating-Snaith 
 conjecture (in this case, the same conjecture was made earlier by 
 Conrey and Ghosh) predicts that 
\begin{eqnarray}\label{eqn:sixth}
\int_0^T |\zeta(1/2+it)|^6 ~dt \sim 42 \prod_p\left(1-\frac 1 p
\right)^4\left(1+\frac 4 {p}+\frac 1 {p^2}\right)
T\frac{\log^9T}{9!}.
\end{eqnarray}
The more precise version due to Conrey, Farmer, Keating,  
Rubinstein and Snaith predicts that for any
$\epsilon>0$,
\begin{eqnarray}\label{eqn:cfkrs}
\int_0^T |\zeta(1/2+it)|^6 ~dt = \int_0^{T} P_3\left(\log
\frac{t}{2\pi}\right)~dt +O(T^{1/2+\epsilon})\end{eqnarray}
 where $P_3$ is a polynomial
of degree $9$ whose exact coefficients are specified as complicated
infinite products and series over primes, and which is given approximately by 
\begin{eqnarray*}&&P_3(x)\approx  0.000005708 \,x^9 + 0.0004050 \,x^8 + 0.01107 \,x^7
+ 0.1484 \,x^6\\
&&\qquad + 1.0459 \,x^5 + 3.9843 \,x^4 + 8.6073 \,x^3 + 10.2743
\,x^2 +  6.5939 \,x + 0.9165.
\end{eqnarray*}
As remarked earlier, the precise conjecture \eqref{eqn:cfkrs} is better 
suited for numerical testing than \eqref{eqn:sixth}.  
 For example,
\begin{eqnarray*}  &&
\int_0^{2350000}|\zeta(1/2+it)|^6 ~dt   \approx 3317496016044.9\approx 3.3\times
10^{12}
\end{eqnarray*}
and this compares well with 
\begin{eqnarray*}&&
\int_0^{2350000} P_3\left(\log \frac{t}{2\pi}\right)~dt \approx
3317437762612.4,
\end{eqnarray*}
whereas
\begin{eqnarray*}&&
 42 \prod_p\left(1-\frac 1 p
\right)^4\left(1+\frac 4 {p}+\frac 1 {p^2}\right)\times
2350000\times \frac{(\log 2350000)^9}{9!} = 4.22\times 10^{11}
\end{eqnarray*}
is nowhere near the prediction. 

The proof of formula (\ref{eqn:sixth}) appears beyond the reach of
current technology.  Our goal in this paper is to establish an 
analog of (\ref{eqn:cfkrs}) for Dirichlet $L$-functions suitably averaged.  
Our formula agrees exactly with the conjecture
of \cite{CFKRS} and so provides, we hope, a new glimpse into the
mechanics of moments.  We first give a corollary of our work, postponing 
the more precise technical result to the next section.  

Let $\chi \pmod q$ be an even, primitive Dirichlet character and let
\begin{eqnarray*}L(s,\chi)=\sum_{n=1}^\infty \frac{\chi(n)}{n^s} 
= \prod_{p} \Big(1- \frac{\chi(p)}{p^s}\Big)^{-1} 
\end{eqnarray*}
be its associated $L$-function.  
This $L$-function satisfies the functional
equation
$$
\Lambda(\tfrac 12 +s,\chi):=\Big(\frac{q}{ \pi}\Big)^{s/2}\Gamma\Big(\frac 14+ \frac s2\Big)L(\tfrac 12 +s,\chi) =\epsilon_\chi
\Lambda(\tfrac 12-s,\overline{\chi}) 
$$
where $\epsilon_\chi$ is a complex number of absolute value 1.   Throughout 
the paper we shall use $\sumflat$ to indicate that a sum is over primitive, even 
Dirichlet characters, and $\phi^{\flat}(q)$ will denote the number of primitive 
even Dirichlet characters $\pmod q$.   The restriction to even characters is 
merely a matter of convenience, and we could equally well consider odd characters 
making appropriate changes to our argument (for example, the $\Gamma$-factor in the 
functional equation will be different).   In analogy  with \eqref{eqn:sixth}, 
we have the following conjecture from \cite{CFKRS} (note that when $q \equiv 2\pmod 4$ there 
are no primitive Dirichlet characters $\pmod q$):

\begin{conjecture}   Put
\begin{equation} 
\label{eqn:a3} 
a_3=\prod_{p} \Big(1-\frac 1
p\Big)^{4}\Big(1+\frac4 p+\frac 1{p^2}\Big).
\end{equation}
Then, as $q\to \infty$ with $q\not\equiv 2 \pmod 4$, 
$$
 \frac{1}{\phi^\flat(q)}\sumflat_{\chi \bmod q}| L(\tfrac 12,\chi)|^6
 \sim 42 a_3 \prod_{p\mid
 q}\frac{\Big(1-\frac{1}{p}\Big)^{5}}{
  \Big(1+\frac4 p+\frac 1{p^2}\Big)}\frac{(\log q)^9}{9!}.
$$
\end{conjecture}

Towards this Conjecture, we shall establish:  

\begin{corollary}  For large $Q$ we have 
\begin{align*}
 \sum_{ q\le Q }\ \ 
\sumflat_{ \chi\bmod q} & \int_{-\infty}^{\infty}
|\Lambda(\tfrac 12+iy,\chi)|^6~dy \\
&\sim 42 a_3  \sum_{ q\le Q}
\prod_{p\mid q}\frac{(1-\frac{1}{p})^{5}}{
  (1+\frac4 p+\frac 1{p^2})} \phi^\flat(q)\frac{(\log q)^9}{9!}
\int_{-\infty}^\infty \Big|\Gamma\Big(\frac{1/2+iy}{2}\Big)\Big|^6~dy.
\end{align*}
 \end{corollary}
 
 The number of primitive characters $\pmod q$ is given by 
 $\phi^*(q) = \sum_{dr=q} \mu(d)\phi(r)$, and the number of 
 even primitive characters is $\phi^{\flat}(q) = \phi^*(q)/2+ O(1)$.  
 Thus one can express the main term in Corollary 1 as 
$$
\sim 42 \widetilde {a_3}\frac{Q^2}{2} \frac{\log ^9 Q}{9!}  \int_{-\infty}^\infty \Big|\Gamma\Big(\frac{1/2+iy}2\Big)\Big|^6~dy
$$ 
where
$$
\widetilde{a_3}=\prod_p \Big(1-\frac{1}{p}\Big)^5\Big(1+\frac5 {p}-\frac{5}{p^2}
+\frac{14}{p^3}-\frac{15}{p^4}+\frac{5}{p^5}+\frac{4}{p^6} -
\frac{4}{p^7}+\frac{1}{p^8}\Big).
$$

However, the form in which we have written Corollary 1 is more suggestive as 
it reveals that we have established an average form of Conjecture 1, by 
introducing an average over the moduli $q$ and also an average over 
points $1/2+iy$.   The average over $q$ significantly increases the size of our 
family of $L$-functions -- from a family of about $q$ $L$-functions of 
conductor $q$ we move to a family of about $Q^2$ $L$-functions with conductor up to $Q$.  
The average over points $1/2+iy$ is more benign -- the completed $L$-function $\Lambda(1/2+iy)$ 
decays exponentially as $|y|$ increases, and so this average should be thought of as involving 
only points within a constant distance from the real axis.   Nevertheless the average 
over $y$ is necessary for our argument to work, and it would be nice to develop a 
corresponding result just at the point $1/2$.   In particular, because of the additional averaging over $y$, 
we do not conclude anything new about non-vanishing results at the central point.

Recall the large sieve inequality
\begin{eqnarray*}
\sum_{q\le Q}\frac{q}{\phi(q)} \sideset{}{^*}\sum_{\chi \bmod q}
\big|\sum_{n=1}^N a_n\chi(n)\big|^2 \le (Q^2+N)\sum_{n=1}^N |a_n|^2
\end{eqnarray*}
where $\sumstar$ indicates that the sum is restricted to  primitive
characters.  As an application of the large sieve, Huxley \cite{Hux} proved that
$$
\sum_{q\le Q} ~\sumstar_{\chi \bmod q}|L(\tfrac 12,\chi)|^6\ll Q^2 \log ^9
Q
$$
and
$$
\sum_{q\le Q} ~\sumstar_{\chi \bmod q}|L(\tfrac 12,\chi)|^8\ll Q^2 \log
^{16} Q.
$$
Our work may be seen as a refinement of Huxley's sixth moment estimate, and in 
this regard the current paper is a companion to \cite{CIS1} and  where we 
develop similar ``asymptotic large sieves" in other contexts.   A challenging problem is 
to obtain a similar asymptotic formula in place of Huxley's estimate for the 
eighth moment.

{\bf Acknowledgments.}  We thank Matthew Young for several useful suggestions.

\section{The sixth moment with shifts}

 \def\bsalpha{\boldsymbol{\alpha}} 
\def\bsbeta{\boldsymbol{\beta}} 

In this section we recall the conjecture from \cite{CFKRS} for the $2k$-th moment of 
Dirichlet $L$-functions, and then state our main theorem from which 
Corollary 1 follows.  Let ${{ \bsalpha}} = (\alpha_1,\ldots,\alpha_{k})$, and $\bsbeta= 
(\beta_1,\ldots,\beta_k)$ be two 
vectors of $k$ complex numbers each, and we suppose that $|{\text Re }(\alpha_j)|, |{\text Re }(\beta_j)| 
\le 1/4$ for all $j$.  We shall also write $\beta_j =\alpha_{k+j}$, and think of 
the pair $(\bsalpha, \bsbeta)$ as defining a $2k$-tuple $(\alpha_1,\ldots,\alpha_{2k})$.  
Given $\bsalpha$ and $\bsbeta$, we define 
$$ 
\Lambda(s, \chi; \bsalpha, \bsbeta) = \prod_{j=1}^{k} \Lambda(s+\alpha_j,\chi) \Lambda(s-\beta_j, 
\overline{\chi}),
$$ 
and set 
$$ 
\Lambda(\chi;\bsalpha, \bsbeta) =\Lambda(\tfrac 12, \chi; \bsalpha,\bsbeta).
$$ 

Note that any permutation of the coordinates of $\bsalpha$, and any permutation 
of the coordinates of $\beta$, leaves $\Lambda(s,\chi;\bsalpha,\bsbeta)$ unaltered.  
Moreover the functional equation gives $\Lambda(s,\chi;\bsalpha,\bsbeta)= 
\Lambda(1-s,\chi;\bsbeta,\bsalpha)$.  When $s=1/2$, the function $\Lambda(\chi;\bsalpha,\bsbeta)$ 
satisfies further symmetry properties.   To see this, note that the permutation group $S_{2k}$ 
acts naturally on the pair $(\bsalpha,\bsbeta)$.  Writing this pair as a $2k$-tuple, for $\pi \in S_{2k}$ 
we define  $\pi(\bsalpha,\bsbeta) = (\alpha_{\pi(1)},\ldots, \alpha_{\pi(2k)})$ and then 
take the first $k$ coordinates to be $\pi(\bsalpha)$ and the second $k$ coordinates  to 
be $\pi(\bsbeta)$.  Now note that the functional equation 
$$
\Lambda(\chi;\bsalpha, \bsbeta) = \Lambda(\chi;\pi (\bsalpha),\pi(\bsbeta)), 
$$ 
holds for all permutations $\pi \in S_{2k}$.   Thus $\Lambda(\chi;\bsalpha,\bsbeta)$ is 
invariant under $S_{2k}$ while $\Lambda(s,\chi;\bsalpha,\bsbeta)$ is 
invariant under the subgroup $S_k \times S_k$.

We wish to state the conjecture from \cite{CFKRS} on the average value of $\Lambda(\chi;
\bsalpha, \bsbeta)$.  Naturally, the conjectured answer must 
share the $S_{2k}$-symmetry described above.  

Define 
\begin{equation}
\label{delta}
\delta({\bsalpha},\bsbeta)=\frac 12 \sum_{j=1}^{k} (\alpha_j -\beta_j),
\end{equation}
and put 
\begin{equation} 
\label{Galpha} 
G(s;\bsalpha,\bsbeta) =  \prod_{j=1}^{k} \Gamma\Big( \frac s2 +\frac{\alpha_j}{2}\Big) 
\Gamma\Big( \frac s2- \frac{\beta_{j}}{2}\Big), 
\end{equation} 
so that
$$
\Lambda(\chi;\bsalpha,\bsbeta) =\Big (\frac q\pi\Big)^{\delta(\bsalpha,\bsbeta)} G(\tfrac 12;
\bsalpha,\bsbeta) \prod_{j=1}^{k} L(\tfrac 12 +\alpha_j, \chi) L(\tfrac 12-\beta_j,\overline{\chi}). 
$$

We define a generalized sum-of-divisors function by 
\begin{equation} 
\label{sigma} 
\sigma(n;\bsalpha) = \sum_{n=n_1 \cdots n_k} n_1^{-\alpha_1} \cdots n_{k}^{-\alpha_k}, 
\end{equation} 
so that we may write, when the real part of $s$ is sufficiently large 
$$ 
\prod_{j=1}^{k} L(s+\alpha_j, \chi) L(s-\beta_j,\overline{\chi}) = 
\sum_{m,n =1}^{\infty} \frac{\sigma(m;\bsalpha)}{m^s} \frac{\sigma(n;-\bsbeta)}{n^s} 
\chi(m) \overline{\chi(n)}. 
$$ 
If we average the above over even primitive characters $\chi$, a candidate for the 
answer would be the diagonal terms $m=n$ with $(m,q)=(n,q)=1$:   namely,
$$ 
\sum_{\substack{{n=1}\\ {(n,q)=1}}}^{\infty} \frac{\sigma(n;\bsalpha)\sigma(n;-\bsbeta)}{n^{2s}}=
\prod_{p\nmid q} {\mathcal B}_p(s;\bsalpha,\bsbeta), 
$$ 
where the Euler factor ${\mathcal B}_p$ is given by 
\begin{equation} \label{eqn:calB}
\mathcal B_p(s;\bsalpha,\bsbeta):=\sum_{r=0}^{\infty} \frac{\sigma(p^r;\bsalpha) \sigma(p^r;-\bsbeta)}{p^{2rs}} = 
 \int_0^1
\prod_{j=1}^k \Big(1- \frac{e(\theta)}{p^{s+\alpha_j}}\Big)^{-1} \prod_{\ell=1}^{k}
\Big( 1- \frac{e(-\theta)}{p^{s-\beta_\ell}} \Big)^{-1}~d\theta .
\end{equation}
The behavior of this Euler product can be understood by comparing it with 
an appropriate product of zeta functions.  
For a prime number $p$ let $\zeta_p(x)  = (1-p^{-x})^{-1}$, 
and define 
\begin{equation} 
\label{eqn:calZ}
{\mathcal Z}_p(s;\bsalpha,\bsbeta) = 
\prod_{j=1}^k \prod_{\ell =1}^{k} \zeta_p (2s+\alpha_j-\beta_\ell) 
\qquad \text{and} 
\qquad \mathcal Z(s;\bsalpha,\bsbeta):=
\prod_{j=1}^k \prod_{\ell=1}^{k} \zeta(2s+\alpha_j-\beta_\ell).  
\end{equation}
Further, let 
\begin{equation}\label{eqn:calA}
{\mathcal A}(s;\bsalpha,\bsbeta):=\prod_p { \mathcal B}_p(s;\bsalpha,\bsbeta)\mathcal Z_p(s;\bsalpha,\bsbeta)^{-1}
\end{equation}
With a little calculation, we may see that the conditions 
$|\text{Re }\alpha_j|, |\text{Re }\beta_j|\le 1/4$ ensure that the Euler 
product for $\mathcal A$ converges absolutely. 
Let $\mathcal B_q=\prod_{p\mid q}\mathcal B_p$, and 
define 
\begin{equation}
\label{eqn:QST}
 \mathcal Q(q;\bsalpha,\bsbeta)=
\Big(\frac{q}{\pi}\Big)^{\delta(\bsalpha,\bsbeta)} G(\tfrac 12;\bsalpha,\bsbeta) \frac{\mathcal A \mathcal
Z}{\mathcal B_{q}}(\tfrac 12;\bsalpha,\bsbeta).
\end{equation}
This is our candidate, based on the diagonal contribution alone, for the 
average of $\Lambda(\chi;\bsalpha,\bsbeta)$.  

Note that ${\mathcal Q}(q;\bsalpha,\bsbeta)$ is symmetric under $S_k \times S_k$, 
but not under the full group $S_{2k}$; so it cannot be the 
full answer for the average of $\Lambda(\chi;\bsalpha,\bsbeta)$.  We symmetrize this by summing over 
all $\binom{2k}{k}$ cosets of $S_{2k}/(S_k \times S_k)$.  Thus we define 
\begin{equation}\label{eqn:calQ} 
\widetilde{ \mathcal Q}(q;\bsalpha,\bsbeta) :=  \sum_{\pi \in S_{2k}/(S_k\times S_k)}
\mathcal Q(q;\pi(\bsalpha), \pi(\bsbeta)), 
\end{equation} 
 and the conjecture of Conrey, Farmer, Keating, Rubinstein and Snaith \cite{CFKRS} is that this object 
 is in fact close to the average value of $\Lam(\chi;\bsalpha,\bsbeta)$.

\begin{conjecture} Assuming that the ``shifts'' $\bsalpha$, $\bsbeta$  
satisfy $|\text{Re }\alpha_j|,  |\text{Re }
\beta_j| \le 1/4,$ and $\text{Im }\alpha_j ,\text{Im }\beta_j \ll q^{1-\epsilon}$, we
conjecture that
\begin{equation}  
\label{eqn:conj}
 \sumflat_{\chi \bmod q} \Lambda(\chi;\bsalpha,\bsbeta)
=\phi^{\flat}(q)\widetilde{\mathcal Q}(q;\bsalpha,\bsbeta)(1+O(q^{-1/2+\epsilon}))
\end{equation}
where $\sumflat$ denotes a sum over even primitive characters.
\end{conjecture}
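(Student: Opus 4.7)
The plan is to evaluate $\sumflat_{\chi\bmod q}\Lambda_{A,B}(\chi)$ in three stages, carried out at the level $|A|=|B|=K=3$ that matters for the sixth moment. First, I would apply an approximate functional equation to the product $\Lambda_{A,B}(\chi)$ of $2K$ Dirichlet $L$-functions. Separating the $\alpha$-factors (weighted by $\chi$) from the $\beta$-factors (weighted by $\overline\chi$) produces a representation
\[
\Lambda_{A,B}(\chi)=\sum_{m,n\ge 1}\frac{d_A(m)\,d_B(n)}{\sqrt{mn}}\,W\!\left(\frac{mn}{q^K};A,B\right)+(\text{dual}),
\]
where $d_A(m)=\sum_{m=m_1\cdots m_K}\prod_j m_j^{-\alpha_j}$ is the shifted $K$-fold divisor function, $W$ is a smooth cutoff built from the gamma ratios in $G_{A,B}$ together with the factor $(q/\pi)^{\delta_{A,B}}$, and the dual piece arises on shifting contours through $\Re s=1/2$ using the individual functional equations.

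Second, averaging over even primitive characters and invoking orthogonality (with the standard M\"obius inclusion-exclusion for primitivity) forces the congruence $m\equiv\pm n\pmod q$ with $(mn,q)=1$. The diagonal $m=n$ yields the Dirichlet series $\sum_{(n,q)=1} d_A(n)d_B(n)/n$, which Ramanujan's multiplicative identity factors as $\mathcal Z(A,B)\cdot\mathcal A(A,B)/\mathcal B_q(A,B)$; combined with the $(q/\pi)^{\delta_{A,B}}G_{A,B}$ prefactors, this reproduces exactly $\mathcal Q(A,B;q)$, the summand of $\mathcal Q_{A,B}(q)$ indexed by $S=T=\emptyset$. The dual diagonal symmetrically yields the antipodal summand with $S=A$, $T=B$.

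Third, and this is where the real work lies, one must handle the off-diagonal $m\equiv\pm n\pmod q$ with $m\neq\pm n$. Writing $m=n+\ell q$ reduces this to the evaluation, uniformly in $\ell\neq 0$, of the shifted ternary divisor correlation
\[
\sum_{n\ge 1}d_A(n)\,d_B(n+\ell q)\,V(n,\ell q).
\]
For $K=3$ this is precisely the ternary additive divisor problem. The spectral machinery of Deshouillers--Iwaniec (Poisson and Voronoi summation followed by the Kuznetsov formula on the resulting Kloosterman sums) furnishes a main term of the form ``residue of an auxiliary multiple Dirichlet series'' with a power-saving error. Summing over $\ell$, applying the functional equation of $\zeta$ at each shift $\alpha+\beta$, and performing a residue calculus then reassembles the output into the remaining $\binom{2K}{K}-2$ swap summands $\mathcal Q(\overline S\cup(-T),\overline T\cup(-S);q)$ of $\mathcal Q_{A,B}(q)$.

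The principal obstacle is unambiguously Step~3: the ternary divisor sum must be controlled with uniformity in a shift $\ell q$ comparable in size to the summation length, which places the analytic estimation at the extreme edge of current spectral technology. Obtaining the claimed $O(q^{-1/2+\epsilon})$ saving requires optimizing this input carefully and then tracking the saving through the subsequent contour-shift and residue calculation; by contrast, the combinatorial identification of the resulting main terms with the CFKRS swap sum is the generic residue calculation built into the \emph{recipe} of [CFKRS] and should be essentially formal once the analytic input is in place.
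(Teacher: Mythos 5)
The statement you were asked to address is stated in the paper as \emph{Conjecture} 1 (the CFKRS conjecture) and is not proved there; for an individual modulus $q$ it remains open. What the paper proves is Theorem 1, an averaged form of the conjecture obtained by additionally summing $q$ over a dyadic window with a smooth weight $\Psi(q/Q)$ and integrating against $\Phi(t)$ in a mild $t$-aspect. Your sketch correctly identifies the route one would follow at a single fixed $q$ (approximate functional equation for $\Lambda_{A,B}(\chi)$, orthogonality of even primitive characters via M\"obius giving $r\mid(m\pm n)$ with $q=dr$, the diagonal $m=n$ producing $\mathcal Z\mathcal A/\mathcal B_q$ and hence the $S=T=\emptyset$ swap term, the dual yielding $S=A$, $T=B$), and you correctly name the obstruction: for $K=3$ the off-diagonal is a shifted ternary divisor correlation $\sum_n\sigma_{-A}(n)\sigma_{-B}(n+\ell q)$ that would have to be evaluated with a power saving uniformly over shifts comparable to the summation length, which is beyond current spectral technology. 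But precisely because that step cannot be carried out, what you have written is not a proof of the conjecture; it is an accurate diagnosis of why it is still a conjecture.

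The actual content of the paper is that the extra average over the modulus bypasses the additive divisor problem entirely. Since $q$ is also being summed, after the M\"obius step the variable $r$ in $q=dr$ is free, and the paper splits the $(d,r)$-sum at $d\le D$ versus $d>D$. In the large-$r$ range it writes $|M\pm N|=rh$ and dualizes to Dirichlet characters modulo $abh$; in both ranges the error from nonprincipal characters is controlled unconditionally by the large sieve (the ``asymptotic large sieve'' of [CIS]), with no Kuznetsov or Deshouillers--Iwaniec input, valid precisely because $K\le 4$. The remaining swap terms of $\mathcal Q_{A,B}(q)$ then emerge as residues of the triple Mellin integral over $\mathcal F_{A,B,D}$ at $s_1=1/2-\alpha'$, $s_2=1/2-\beta'$, $z=s_1+s_2$. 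If you want a proof that matches what the paper actually establishes, you should target Theorem 1 rather than Conjecture 1, and the indispensable new ingredient is this average over $q$ together with the large-sieve replacement of the ternary-divisor step.
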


Even though ${\mathcal Q}(q;\bsalpha,\bsbeta)$ has singularities 
(when $\alpha_j=\beta_\ell$) the symmetrized $\widetilde{\mathcal Q}(q;\bsalpha,\bsbeta)$ 
is in fact analytic in $\alpha_j$, $\beta_\ell$.   Thus one can let all 
the shifts tend to zero in Conjecture 2, and obtain a conjecture 
for the $2k$-th moment  of $L(\tfrac 12,\chi)$.  We refer to \cite{CFKRS}
for the details of this calculation, and note that when $k=3$ this 
is what leads to Conjecture 1 stated earlier.

Given $\bsalpha$ we define $\bsalpha +s$ to be the 
translated $k$-tuple $(\alpha_1 +s, \ldots, \alpha_k+s)$, and 
similarly $\bsbeta+s = (\beta_1+s,\ldots, \beta_k+s)$.  
Now we are ready to state our main theorem.  

\begin{theorem}  Let $Q$ be large, and let $\bsalpha$ and $\bsbeta$ be $3$-tuples 
with $\alpha_j$, $\beta_j \ll 1/\log Q$.  Let $\Psi$ be a smooth function compactly 
supported in $[1,2]$.  Then
\begin{align*}
\sum_{q}\Psi\left(\frac q Q\right) \int_{-\infty}^\infty 
 &\sumflat_{\chi }  \Lambda(\chi;\bsalpha+iy, \bsbeta+iy)~dy\\
&=\sum_{q}\Psi\left(\frac q Q\right) \int_{-\infty}^\infty 
 \phi^\flat(q)\widetilde{\mathcal Q}(q;\bsalpha+iy,\bsbeta+iy)~dy
+O(Q^{19/10+\epsilon}).
\end{align*}
\end{theorem}

We could equally well prove a theorem for odd primitive characters.
The answer would be similar with just the Gamma-factors changed
slightly to reflect the difference in the functional equation for
odd primitive Dirichlet $L$-functions. When the shifts are all $0$,
this difference disappears in the leading order main term.

In what follows, we focus on establishing Theorem 1.  From this, 
Corollary 1 follows by letting the shifts tend to zero, 
and by making $\Psi$ approximate the characteristic function of $[1,2]$ 
and replacing $Q$ by $Q/2$, $Q/4$, $\ldots$.    Since the calculation 
of letting the shifts tend to zero is entirely analogous to the 
derivation of Conjecture 1 from Conjecture 2 in \cite{CFKRS}, we omit 
the details.

\section{The ``approximate" functional equation}

\noindent We formulate a general approximate functional equation for 
shifted products of $L$-functions, which we shall later specialize to 
the case when $k=3$.   Let 
\begin{equation} 
\label{H(s)} 
H(s;\bsalpha, \bsbeta) = \prod_{j=1}^{k} \prod_{\ell=1}^{k} 
\Big( s^2 -\Big(\frac{\alpha_j -\beta_\ell}{2}\Big)^2 \Big)^3,
\end{equation}
and put for any $\xi >0$ 
\begin{equation} 
\label{eqn:W}
W(\xi;\bsalpha,\bsbeta) = \frac{1}{2\pi i} \int_{(1)} G(\tfrac 12+ s;\bsalpha,\bsbeta) H(s;\bsalpha,\bsbeta) \xi^{-s} 
\frac{ds}{s}. 
\end{equation} 
Put 
\begin{equation}
\label{Lam0} 
\Lam_0(\chi;\bsalpha, \bsbeta)  =\Big(\frac{q}{\pi}\Big)^{\delta(\bsalpha,\bsbeta)} 
 \sum_{m,n=1}^{\infty} 
\frac{\sigma(m;\bsalpha)}{\sqrt{m}} \frac{\sigma(n;-\bsbeta)}{\sqrt{n}} 
\chi(m) \cbar(n) W\Big(\frac{mn\pi^{k}}{q^k};\bsalpha,\bsbeta\Big). 
\end{equation} 

\begin{proposition}
\label{AFE} With notation as above, we have 
$$ 
H(0;\bsalpha,\bsbeta) \Lam(\chi;\bsalpha,\bsbeta) = \Lam_0(\chi;\bsalpha,\bsbeta) + \Lam_0(\chi;\bsbeta,\bsalpha).
$$
\end{proposition} 
\begin{proof}   We begin with 
$$ 
\frac{1}{2\pi i} \int_{(1)} \Lam(\tfrac 12+s,\chi;\bsalpha, \bsbeta) H(s;\bsalpha, \bsbeta) \frac{ds}{s}.  
$$ 
We move the line of integration to Re$(s)=-1$, encountering a pole at $s=0$ 
which leaves the residue $\Lam(\chi;\bsalpha,\bsbeta)H(0;\bsalpha,\bsbeta)$.  
For the remaining integral 
on the $-1$-line, we use the functional equation $\Lam(\tfrac 12+s, \chi;\bsalpha,\bsbeta) 
= \Lam(\tfrac 12- s, \chi; \bsbeta,\bsalpha)$, together with $H(s;\bsalpha,\bsbeta) 
= H(-s;\bsbeta,\bsalpha)$ to conclude that this integral equals (replacing $-s$ by $w$)
$$ 
-\frac{1}{2\pi i} \int_{(1)} \Lam(\tfrac 12 +w, \chi;\bsbeta,\bsalpha) H(w;\bsbeta,\bsalpha)\frac{dw}{w}. 
$$ 
We conclude that 
$$ 
H(0;\bsalpha,\bsbeta) \Lam(\chi;\bsalpha,\bsbeta) 
= \frac{1}{2\pi i} \int_{(1)} \Big( \Lam(\tfrac 12+s,\chi;\bsalpha,\bsbeta) 
H(s;\bsalpha,\bsbeta) + \Lam(\tfrac 12+s,\chi;\bsbeta,\bsalpha) H(s;\bsbeta,\bsalpha)\Big) 
\frac{ds}{s}.
$$
Expanding the $L$-functions into their Dirichlet series we obtain the Proposition. 
\end{proof}

For any positive real numbers $\xi$, $\eta$ and $\mu$ let us define 
\begin{equation}
\label{eqn:3}
V_{\bsalpha,\bsbeta}(\xi, \eta; \mu)= \Big( \frac{\mu}{\pi} \Big)^{\delta(\bsalpha,\bsbeta)} 
\int_{-\infty}^{\infty} \Big( \frac{\eta}{\xi}\Big)^{it} W\Big( \frac{\xi\eta \pi^k}{\mu^k}; \bsalpha+it, 
\bsbeta+it\Big) dt. 
\end{equation} 
Further, set 
\begin{equation} 
\label{eqn:2} 
\Lambda_1(\chi;\bsalpha, \bsbeta)  = \sum_{m, n=1}^{\infty} \frac{\sigma(m;\bsalpha)}{\sqrt{m}} 
\frac{\sigma(n;-\bsbeta)}{\sqrt{n}} \chi(m) \cbar(n) V_{\bsalpha,\bsbeta} (m, n;q). 
\end{equation} 
 
\begin{proposition} 
\label{AFE2} 
With notation as above, we have 
$$ 
H(0;\bsalpha,\bsbeta) \int_{-\infty}^{\infty} \Lambda(\chi;\bsalpha+it,\bsbeta+it) dt = 
\Lam_1(\chi;\bsalpha,\bsbeta) + \Lam_1(\chi;\bsbeta,\bsalpha). 
$$
\end{proposition} 
\begin{proof}  This follows readily from Proposition \ref{AFE} and the 
observation $\sigma(n;\bsalpha+it) = \sigma(n;\bsalpha)n^{-it}$.  
\end{proof} 

Our proof of Theorem 1, starts from the approximate functional 
equation given in Proposition \ref{AFE2} with $k=3$.   We shall analyze 
the terms $\Lam_1(\chi;\bsalpha,\bsbeta)$ and $\Lam_1(\chi;\bsbeta,\bsalpha)$ 
when averaged over all characters $\chi$ with conductor of size $Q$.  Notice 
that while the original function $\Lam(\chi;\bsalpha,\bsbeta)$ has $S_{2k}$ 
symmetry, the terms arising from Proposition \ref{AFE2} have only $S_k\times S_k$ 
symmetry.  Thus at the outset, the symmetry of the final answer has been 
lost, and this is one reason that the analysis of the main terms in our 
argument is complicated and we have to work hard to recover 
the symmetry in the end.  It would be interesting to develop an approach which 
maintains the symmetry throughout the argument, but we don't know 
how to do this.  

In our later work, it will be useful to have an understanding of the weights $W$ and 
$V$ defined above.  

\begin{lemma} 
\label{weights}  The weight $W(\xi;\bsalpha,\bsbeta)$ is a smooth function of $\xi \in (0,\infty)$.  
For large values of $\xi$, and any non-negative integer $\nu$, we have $W^{(\nu)}(\xi;\bsalpha,\bsbeta) \ll_{\nu} \exp(-c_0 \xi^{\frac 1k})$ for some absolute constant $c_0>0$.  
Further, we have 
$$ 
V_{\bsalpha,\bsbeta}(\xi, \eta;\mu) \ll \Big(\frac{\mu}{\pi} \Big)^{{\rm{Re }}  \delta(\bsalpha,\bsbeta)} 
\exp\Big(  - c_1 \Big(\frac{\max(\xi,\eta)^2}{\mu^k}\Big)^{\frac 1k} \Big). 
$$
\end{lemma} 
\begin{proof}  For any non-negative integer $\nu$ and any $c>0$ we have 
$$ 
\frac{d^{\nu}}{d\xi^{\nu}} W(\xi;\bsalpha,\bsbeta) = 
\frac{1}{2\pi i} \int_{(c)} G(\tfrac 12+s;\bsalpha, \bsbeta) H(s;\bsalpha,\bsbeta) 
\Big(\frac{d^{\nu}}{d\xi^{\nu}} \xi^{-s} \Big) \frac{ds}{s}. 
$$ 
It follows that $W$ is smooth in $\xi$.  Moreover, for $\xi$ large we choose $c=\xi^{1/k}$, and then a calculation using Stirling's formula reveals that the above is $\ll_{\nu}\exp(-c_0 \xi^{1/k})$ as 
stated. 

To prove the estimate for $V_{\bsalpha,\bsbeta}$, assume without loss of generality that 
$\eta \ge \xi$.  In the definition \eqref{eqn:3} we let $z= it$ denote a 
complex variable, and insert the definition of $W$.  Thus, using $H(s;\bsalpha+z,\bsbeta_z) 
= H(s;\bsalpha,\bsbeta)$, we find that 
$$ 
V_{\bsalpha,\bsbeta} (\xi,\eta;\mu) 
=\Big(\frac{\mu}{\pi}\Big)^{\delta(\bsalpha,\bsbeta)} \frac{1}{i} \int_{-i\infty}^{i\infty} 
\Big(\frac{\eta}{\xi}\Big)^{z} \frac 1{2\pi i} \int_{(c) } G(\tfrac 12+s;\bsalpha+z,\bsbeta+z) 
H(s;\bsalpha,\bsbeta) \Big(\frac{\xi\eta \pi^k}{\mu^k}\Big)^{-s} \frac{ds}{s} .
$$ 
Now we may take the line of integration for $z$ to be any line Re$(z) =d$, and 
the line of integration for $s$ to be any $c>0$, and so long as $|d|\le c$ 
no poles of the $\Gamma$-functions present in $G$ will be encountered.  
In the situation when $\eta\ge \xi$, we choose $c= (\eta^2/\mu^k)^{1/k}$ and 
$d=-c$, and then a calculation with Stirling's formula gives our desired bound. 

\end{proof}  

To close this section, we remark on why the integral over $y$ is needed in 
our Theorem.  Specializing to the case $k=3$, in the approximate functional equation of Proposition \ref{AFE} 
the variables $m$ and $n$ are restricted so that their product is about size $q^3$, 
and it is possible for one variable to be very large (about size $q^3$) while the 
other remains small.  Such a range is not amenable to the argument we develop.  
In contrast, the integrated approximate functional equation of Proposition \ref{AFE2} 
leads (thanks to Lemma \ref{weights}) to terms where $m$ and $n$ are 
both at most $q^{\frac 32+\epsilon}$.



 \section{ First steps towards Theorem 1}

\noindent Henceforth we assume that $k=3$ and that the shifts $\bsalpha$ and $\bsbeta$ 
satisfy Re $\alpha_j, \text{Re }\beta_j \ll 1/\log Q$.  Our aim is to evaluate asymptotically
\begin{equation}
\label{eqn:4} 
\mathcal{I}=\mathcal{I}(\Psi,Q;\bsalpha, \bsbeta):= H(0;\bsalpha,\bsbeta) \sum_{ q }
\sumflat_{ \chi\bmod q} \Psi\Big(\frac qQ\Big) 
\int_{-\infty}^{\infty} \Lambda(\chi;\bsalpha+it,\bsbeta+it)dt, 
\end{equation} 
where $\Psi$ is a fixed smooth function, compactly supported in $[1,2]$. 
Using Proposition \ref{AFE2}, we may write 
\begin{equation} 
\label{Delta} 
 {\mathcal I}(\Psi,Q;\bsalpha, \bsbeta)=\Delta(\Psi,Q;\bsalpha,\bsbeta)+ \Delta(\Psi,Q;\bsbeta,\bsalpha)
 \end{equation} 
 where
 \begin{equation}\label{eqn:5}  
 \Delta(\Psi,Q;\bsalpha,\bsbeta) =\sum_{ q } \sumflat_{ \chi\bmod q}
 \Psi\Big(\frac qQ\Big)\Lambda_1(\chi;\bsalpha,\bsbeta).
  \end{equation}
  
  \begin{lemma}
  \label{orth}  Let $\phi^*(q)$ denote the number of primitive characters $\pmod q$, and 
  let $\phi^{\flat}(q)$ denote the number of even primitive characters $\pmod q$.  If $m$ and 
  $n$ are integers with $(mn,q)=1$ then 
  $$ 
  \sumflat_{\chi\pmod q} \chi(m) \cbar(n) = \frac 12 \sum_{{\substack{{q=dr}\\{r| (m\pm n)} }} }\mu(d)\phi(r), 
  $$ 
  where we sum over both choices of sign.  In particular, $\phi^*(q) = \sum_{dr=q} \mu(d) \phi(r)$, 
  and $\phi^{\flat}(q) = \phi^*(q)/2+ O(1)$.  
  \end{lemma}
  \begin{proof}  From the orthogonality relations for characters and M{\" o}bius inversion 
  we easily see that 
  $$ 
  \sumstar_{\chi\pmod q} \chi(m) \overline{\chi(n)} = \sum_{\substack{{q=dr}\\ {r|m-n} }} \mu(d) \phi(r),
  $$ 
  where the sum is over all primitive characters $\pmod q$.  Since an even character 
  can be detected by $\tfrac 12 (1+ \chi(-1))$ we obtain the formula for the 
  sum over even primitive characters.  \end{proof}

From the definitions \eqref{eqn:3}, \eqref{eqn:2} and \eqref{eqn:5}, and Lemma 
\ref{orth} we obtain that  
 \begin{equation} 
 \label{eqn:2.3} 
 \Delta(\Psi,Q;\bsalpha,\bsbeta) 
 =\frac 12  \sum_{ m,n =1}^{\infty} \frac{\sigma(m;\bsalpha)}{\sqrt{m}} \frac{\sigma(n;-\bsbeta)}
 {\sqrt{n}}
 \sum_{\substack{{d, r}  \\{ (dr,mn)=1} \\ { r|m\pm n}}}
 \phi(r)\mu(d)\Psi\Big(\frac{dr}Q\Big) V_{\bsalpha,\bsbeta}(m,n;dr).
 \end{equation}

 There is a diagonal contribution to (\ref{eqn:2.3}) from the terms $m=n$, 
 which we call $\mathcal D(\Psi,Q;\bsalpha,\bsbeta)$.
 For the terms $m\neq n$, we introduce a parameter $D$ and divide
 the sum in (\ref{eqn:2.3}) into two ranges depending on whether $d>  D$ or $d\le D$.
Call the first set of terms $\mathcal S(\Psi,Q;\bsalpha,\bsbeta)$ (for small values of
$r$), and the second $\mathcal G(\Psi,Q;\bsalpha,\bsbeta)$ (for greater values of $r$).
Thus, we have the decomposition 
\begin{equation} \label{eqn:mainsum}
\Delta(\Psi,Q;\bsalpha,\bsbeta)=\mathcal D(\Psi,Q;\bsalpha,\bsbeta)+
\mathcal S(\Psi,Q;\bsalpha,\bsbeta)+\mathcal G(\Psi,Q;\bsalpha,\bsbeta).
\end{equation}

Of these three terms, the diagonal contribution is the easiest to evaluate, and 
we treat it first. 

\begin{lemma} 
\label{diagonal} 
With notations as above, 
$$ 
\mathcal {D}(\Psi,Q;\bsalpha,\bsbeta) =  H(0;\bsalpha,\bsbeta) 
\sum_{q} \Psi\Big(\frac qQ\Big) \phi^{\flat}(q) \int_{-\infty}^{\infty} 
{\mathcal Q}(q;\bsalpha+ it, \bsbeta+it) dt + O(Q^{\frac 54+\epsilon}).
$$
\end{lemma}
\begin{proof}  If $(n,q)=1$ then 
$$
\frac 12 \sum_{\substack{{dr=q}\\ {r | n\pm n} } } \mu(d) \phi(r) = \phi^\flat(q),
$$ 
and so  we see that
\begin{align}\label{eqn:diag1} 
\mathcal D(\Psi,Q;\bsalpha, \bsbeta) = \sum_q \phi^\flat(q) \Psi\Big(\frac qQ\Big) 
\sum_{(n,q)=1} \frac{\sigma(n;\bsalpha)\sigma(n;-\bsbeta)}{n} V_{\bsalpha,\bsbeta}(n,n;q). 
\end{align}
Recalling the definition of $V$ from (\ref{eqn:3}) 
we see that the sum over $n$ above equals
$$ 
\int_{-\infty}^{\infty} \frac{1}{2\pi i} 
\int_{(1)} G(\tfrac 12+s;\bsalpha+it,\bsbeta+it) H(s;\bsalpha+it, \bsbeta+it) 
\Big(\frac{q}{\pi}\Big)^{3s +\delta(\bsalpha,\bsbeta)} \sum_{\substack{{n=1}\\ {(n,q)=1}}}^{\infty} 
\frac{\sigma(n;\bsalpha)\sigma(n;-\bsbeta)}{n^{1+2s}} \frac{ds}{s} dt.  
$$ 
Since  
$$ 
\sum_{\substack{{n=1}\\ {(n,q)=1}}}^{\infty} 
\frac{\sigma(n;\bsalpha)\sigma(n;-\bsbeta)}{n^{1+2s}} 
= \frac{{\mathcal {AZ}}}{\mathcal B_q}(\tfrac 12+s; \bsalpha,\bsbeta), 
$$ 
our expression above equals 
\begin{equation} 
\label{eq:4.7} 
\int_{-\infty}^{\infty} \frac{1}{2\pi i} 
\int_{(1)} G(\tfrac 12+ s;\bsalpha+it,\bsbeta+it) H(s;\bsalpha+it, \bsbeta+it) 
\Big(\frac{q}{\pi}\Big)^{3s +\delta(\bsalpha,\bsbeta)}  \frac{{\mathcal {AZ}}}{\mathcal B_q}(\tfrac 12+s; \bsalpha,\bsbeta) \frac{ds}{s} dt. 
\end{equation}

In the region Re$(s)\ge -\tfrac 14+\epsilon$, the integrand has a simple pole  in $s$ at  $s=0$.   
While ${\mathcal Z}(\tfrac 12+s;\bsalpha,\bsbeta)$ has poles at $s= -(\alpha_j- \beta_\ell)/2$ 
(with $1\le j, \ell \le 3$), these poles are canceled by the zeros of $H(s;\bsalpha+it,\bsbeta+it) 
= H(s;\bsalpha,\bsbeta)$ at these points.   Thus, moving the line of integration in \eqref{eq:4.7} 
to Re$(s)=-\tfrac 14+\epsilon$, we obtain  
$$ 
\int_{-\infty}^{\infty} G(\tfrac 12;\bsalpha+it, \bsbeta+it) H(0;\bsalpha,\bsbeta) 
\frac{\mathcal {AZ}}{\mathcal {B}_q}(\tfrac 12;\bsalpha,\bsbeta) dt +O(q^{-\frac 34+\epsilon}).
$$ 
Using this in  \eqref{eqn:diag1}, we obtain the Lemma.  
 \end{proof}  
 
Thus, $\mathcal D(\Psi,Q;\bsalpha,\bsbeta)$ accounts for one 
of the twenty terms in our conjectured asymptotic formula 
for the shifted sixth moment,  namely the term corresponding to the 
identity permutation in $S_{6}/S_3\times S_3$.  
Similarly ${\mathcal D}(\Psi,Q;\bsbeta,\bsalpha)$ accounts for another 
of these twenty terms, namely the one corresponding to the involution 
$(\bsalpha,\bsbeta) \to (\bsbeta,\bsalpha)$.  
The remaining eighteen expressions arise from off-diagonal terms, 
and we now embark on analyzing their contribution.

\section{Evaluating $\mathcal S(\Psi,Q;\bsalpha,\bsbeta)$}

\noindent  Recall that 
$$ 
{\mathcal S}(\Psi,Q;\bsalpha,\bsbeta) 
= \frac 12 \sum_{\substack{{m,n=1}\\ {m\neq n}}}^{\infty} 
\frac{\sigma(m;\bsalpha)}{\sqrt{m}} \frac{\sigma(n;-\bsbeta)}{\sqrt{n}} 
\sum_{\substack{{d>D, r} \\ {(dr,mn)=1} \\ {r|m\pm n}} }
\mu(d) \phi(r) \Psi\Big(\frac{dr}{Q}\Big) 
V_{\bsalpha,\bsbeta}(m,n;dr). 
$$ 
We express the condition $r|(m\pm n)$ above using the 
characters $\psi \pmod r$.   Thus  the above equals 
\begin{align}
\label{eq:5.1}
 \sum_{ d > D} \sum_{r}\mu(d)
\Psi\Big(\frac{dr}{Q}\Big) 
\sum_{ \substack{{\psi \pmod r}\\ { \psi(-1)=1}}} 
\sum_{ \substack{{m,n =1}\\{ (mn,dr)=1}
 \\ { m\ne n}}}^{\infty} \psi(m)\overline{\psi}(n)
 \frac{\sigma(m;\bsalpha)}{\sqrt{m}} \frac{\sigma(n;-\bsbeta)}{\sqrt{n}}
   V_{\bsalpha,\bsbeta}(m,n;dr). 
\end{align}
We now isolate the contribution of the principal character $\psi =\psi_0$ 
above which gives rise to a main term, and we show that the 
non-principal characters give an acceptable error term.  

\begin{proposition}  We have 
$$ 
{\mathcal S}(\Psi,Q;\bsalpha,\bsbeta) = 
{\mathcal {MS}}(\Psi,Q;\bsalpha,\bsbeta) + O (Q^{2+\epsilon}/D), 
$$ 
where 
$$ 
{\mathcal {MS}}(\Psi,Q;\bsalpha,\bsbeta) = -\sum_q 
\Big( \sum_{\substack {{dr=q}\\ {d\le D}}} \mu(d)\Big) 
\Psi\Big(\frac{q}{Q}\Big) \sum_{\substack{{m, n=1} \\ {(mn,q)=1} \\ {m\neq n }}} \frac{\sigma(m;\bsalpha)}{\sqrt{m}} 
\frac{\sigma(n;-\bsbeta)} {\sqrt{n}} V_{\bsalpha, \bsbeta}(m,n;q).
$$ 
\end{proposition} 
\begin{proof}   The principal character $\psi=\psi_0$ contributes to 
\eqref{eq:5.1}  the amount
$$ 
\sum_{q } \Big(\sum_{\substack {{dr=q}\\ {d>D} }} 
\mu(d) \Big) \Psi\Big( \frac{q}{Q}\Big) 
\sum_{\substack{{m, n=1} \\ {(mn,q)=1} \\ {m\neq n }}} \frac{\sigma(m;\bsalpha)}{\sqrt{m}} 
\frac{\sigma(n;-\bsbeta)} {\sqrt{n}} V_{\bsalpha, \bsbeta}(m,n;q),  
$$
and since $\sum_{dr=q} \mu(d) =0$ for $q>1$ this equals the term $\mathcal{MS}(\Psi,Q;\bsalpha,\bsbeta)$ identified in our Proposition. 

Now we consider the contribution of the non-principal characters to \eqref{eq:5.1}.   
To estimate this, we begin by reincorporating the terms $m=n$ to \eqref{eq:5.1}.  The 
error introduced in doing this is 
\begin{align*}
&\ll \sum_{\substack{{q=dr}\\ {d>D}}} r \Psi\Big(\frac{dr}{Q}\Big) \sum_{n} 
\frac{|\sigma(n;\bsalpha)\sigma(n;-\bsbeta)|}{n} |V_{\bsalpha,\bsbeta}(n,n;q) |\\
&\ll Q^{\epsilon} \sum_{d>D} \sum_{r} \Psi\Big(\frac{dr}{q}\Big) \ll Q^{\epsilon} 
\sum_{d>D} \frac{Q^2}{d^2} \ll \frac{Q^{2+\epsilon}}{D}, 
\end{align*} 
which is acceptable for our Proposition.  

We now wish to estimate 
\begin{equation} 
\label{eq:5.2} 
\sum_{d>D} \sum_{r} \mu(d) \Psi\Big(\frac{dr}{Q}\Big) 
\sum_{\substack{{\psi \pmod r}\\ {\psi(-1)=1} \\ {\psi\neq \psi_0}}} 
\sum_{\substack{{m, n=1}\\ {(mn,dr)=1}}}^{\infty} \psi(m) 
\overline{\psi(n)} \frac{\sigma(m;\bsalpha)}{\sqrt{m}} \frac{\sigma(n;-\bsbeta)}{\sqrt{n}}
V_{\bsalpha,\bsbeta}(m,n;dr). 
\end{equation} 
From the definition of $V_{\bsalpha,\bsbeta}$ we see that the 
sum over $m$ and $n$ above is 
\begin{align}
\label{eq:5.3} 
\int_{-\infty}^{\infty} \frac{1}{2\pi i} 
\int_{(1)} G(\tfrac 12+s;\bsalpha+it,\bsbeta+it)& H(s;\bsalpha,\bsbeta) \Big(\frac{dr}{\pi}\Big)^{3s+\delta(\bsalpha,\bsbeta)} 
\nonumber \\
&\times \sum_{\substack{{m,n=1}\\ {(mn,dr)=1}}}^{\infty} \frac{\sigma(m;\bsalpha)\sigma(n;-\bsbeta)}{(mn)^{\frac 12+s}} \psi(m)
\overline{\psi(n)} \frac{ds}{s} dt.  
\end{align} 
Now the sum over $m$ and $n$ above gives 
$$ 
\prod_{j=1}^{3 }\prod_{\ell=1}^3 \frac{L(\tfrac 12+\alpha_j+s,\psi)}{L_{dr}(\tfrac 12+\alpha_j+s,\psi)} 
\frac{L(\tfrac 12-\beta_\ell +s,\overline{\psi})}{L_{dr}(\tfrac 12-\beta_\ell+s,\overline{\psi})},
$$ 
where $L_{dr}(s,\psi)=\prod_{p\mid dr}(1-\psi(p)p^{-s})^{-1} $. 
We use this above, and since $\psi$ is non-principal we 
may move the line of integration to Re$(s) = \epsilon >0$ without 
encountering any poles.   Since $G(\tfrac 12+s;\bsalpha+it, \bsbeta+it)$ decreases 
exponentially in $|t|+|\text{Im}(s)|$, and since the finite Euler products in $L_{dr}$ are $\ll Q^{\epsilon}$, 
we find that the quantity in \eqref{eq:5.3} is 
$$ 
\ll Q^{\epsilon} \int_{(\epsilon)} \exp(-|\text{Im}(s)|) \Big( \sum_{j=1}^{3} |L(\tfrac 12+\alpha_j+s,\psi)|^6 
+\sum_{\ell=1}^{3} | L(\tfrac 12-\beta_\ell +s,\overline{\psi})|^6 \Big) |ds|.
$$ 
Using this in \eqref{eq:5.2} we obtain that the quantity there is 
$$ 
\ll Q^{\epsilon} \sum_{r\le 2Q/D} \frac{Q}{r} \sum_{\substack{{\psi \pmod r}\\ {\psi \neq \psi_0}}} 
\int_{(\epsilon)} \exp(-|\text{Im}(s)|) \Big( \sum_{j=1}^{3} |L(\tfrac 12+\alpha_j+s,\psi)|^6 
+\sum_{\ell=1}^{3} | L(\tfrac 12-\beta_\ell +s,\overline{\psi})|^6 \Big) |ds|.
$$ 
Using the large sieve we see that the above is $\ll Q^{2+\epsilon}/D$, 
completing our proof. 
 \end{proof}

\section{Evaluating $\mathcal G(\Psi,Q;\bsalpha,\bsbeta)$:  Switching to the complementary divisor}

\noindent We now begin our treatment of ${\mathcal G}(\Psi,Q;\bsalpha,\bsbeta)$ 
which is the most difficult term in our analysis.   Recall that 
\begin{equation} \label{eqn:G}
\mathcal G(\Psi,Q;\bsalpha,\bsbeta)  =\frac 12  \sum_{\substack{ {m,n =1}\\ {m\neq n} }}^{\infty}
\frac{\sigma(m;\bsalpha)\sigma(n;-\bsbeta)}{\sqrt{mn}}
 \sum_{ {{d, r \atop (dr,mn)=1} \atop r|m\pm n}\atop d\le D}
 \phi(r)\mu(d) \Psi\Big(\frac{dr}Q\Big) V_{\bsalpha,\bsbeta}(m,n;dr).
 \end{equation}
 
 To proceed further, we write $g=(m,n)$ (the gcd of $m$ and $n$), and 
 put $m=gM$ and $n=gN$ so that $(M,N)=1$.   For a given $m$, $n$, 
 the inner sum over $d$ and $r$ above may be written as 
 \begin{equation}
 \label{eq:6.2}
 \sum_{\substack{{d\le D} \\ {(d,gMN)=1} } } 
 \mu(d) \sum_{ \substack{{r| (M\pm N)} \\{ (r,gMN)=1}} } \phi(r) \Psi\Big(\frac{dr}{Q} \Big) 
 V_{\bsalpha,\bsbeta}(gM,gN;dr). 
 \end{equation} 
 We may express the condition that $r|(M\pm N)$ by writing $|M\pm N| = rh$, and 
 our goal now is to eliminate $r$ from the sum above, recasting it in terms of the 
 complementary divisor $h$.  The reason for doing this is that while $r$ is 
 large in this term, the complementary divisor $h$ is small.  
 
 First we rewrite the arithmetic function $\phi(r)$ as $\sum_{a\ell=r} \mu(a) \ell$.  Then note that 
 the  condition $(r,gMN)=1$ is equivalent to $(r,g)=1$ (since $(M,N)=1$ and $r|(M\pm N)$) and 
 this is equivalent to $(a,g)=1$ and $(\ell,g)=1$.   Thus 
 our expression \eqref{eq:6.2} may be recast as 
 $$ 
 \sum_{\substack{{d\le D} \\ {(d,gMN)=1} } } 
 \mu(d) \sum_{ (a,g)=1
} \mu(a) \sum_{ \substack{{\ell} \\ {|M\pm N| = a\ell h }\\{ (\ell, g)=1}}} \ell
\Psi\Big(\frac{da\ell}{Q}\Big)V_{\bsalpha,\bsbeta}(gM,gN;da\ell).
$$ 
Now we express the condition that $(\ell,g)=1$ using $\sum_{b|(\ell,g)} \mu(b)$.  Thus the 
above equals 
\begin{equation*}
 \sum_{\substack{{ d \le D} \\{ (d,gMN)=1}}} \mu(d)
  \sum_{ (a,g)=1} \mu(a) \sum_{ b |
g} \mu(b) \sum_{\substack{{k}\\{ |M\pm N|= ab k h}}}  b k 
\Psi\Big(\frac{dab k}{Q}\Big) V_{\bsalpha,\bsbeta}(gM,gN;dab k).
 \end{equation*}
 At this juncture, we may replace the sum over $k$ by a sum over the 
 complementary variable $h$ with the condition that 
 $M\pm N \equiv 0 \pmod{abh}$, and then we may write $|M\pm N|/(abh)$ in place of $k$.  
 Thus we may eliminate the variable $k$, and recast the above as
 \begin{align}\label{eq:6.3} 
\sum_{ \substack{{d \le D }\\ { (d,gMN)=1}}} 
 \sum_{(a,g)=1} \sum_{ b | g} 
 \sum_{ \substack{{h >0}\\  { M\equiv \mp N \pmod{ab h}}}} 
& \mu(d)\mu(a)\mu(b)\nonumber \\
 &\times \bigg(\frac{|M\pm N|}{a h}\bigg)
  \Psi\Big(\frac{|M\pm N| d}{Qh}\Big)
V_{\bsalpha,\bsbeta}\Big(gM,gM;\frac{d|M\pm N|}h\Big).
\end{align}

To simplify this expression we define, for 
non-negative real numbers $u$, $x$ and $y$, and 
for each choice of sign,
\begin{equation} 
\label{eq:6.4} 
{\mathcal W}^{\pm}_{\bsalpha,\bsbeta}(x,y;u) 
= u|x\pm y| \Psi(u|x\pm y|) V_{\bsalpha,\bsbeta}(x,y;u|x\pm y|). 
\end{equation} 
Recall from \eqref{eqn:3} (and with $k=3$ there) 
that 
$$ 
V_{\bsalpha,\bsbeta}(\xi,\eta;\mu) = \Big( \frac{\mu}{\pi}\Big)^{\delta(\bsalpha,\bsbeta)} 
\int_{-\infty}^{\infty} \Big(\frac{\eta}{\xi}\Big)^{it} 
W\Big(\frac{\xi \eta \pi^3}{\mu^3};\bsalpha+it,\bsbeta+it\Big) dt, 
$$ 
and so, for $c>0$,
\begin{eqnarray*}
 c^{-2\delta(\bsalpha,\bsbeta)/3}V_{\bsalpha,\bsbeta}(c\xi,c\eta;\mu c^{2/3})=
V_{\bsalpha,\bsbeta}(\xi,\eta;\mu).
\end{eqnarray*}

With this notation the quantity in \eqref{eq:6.3} becomes 
\begin{equation} 
\label{eq:6.5} 
Q^{1+\delta(\bsalpha,\bsbeta)}\sum_{\substack{{d \le D }\\ { (d,gMN)=1}}} 
 \sum_{(a,g)=1} \sum_{ b | g} 
 \sum_{ \substack{{h >0}\\  { M\equiv \mp N \pmod{ab h}}}} 
 \frac{\mu(a)\mu(b)\mu(d)}{ad} {\mathcal W}^{\pm}_{\bsalpha,\bsbeta}\Big(\frac{gM}{Q^{\frac 32}},\frac{gN}{Q^{\frac 32}};\frac{Q^{\frac 12}d}{gh}\Big).
 \end{equation}
We now express the condition $M\equiv \mp N \pmod{abh}$ using
characters $\psi \pmod{abh}$; note that since $(M,N)=1$ we have $(MN,abh)=1$.  We isolate the contribution  of the
principal character which gives rise to main terms, 
and the remaining characters will contribute an acceptable error term.  

Putting the above remarks together, we have shown that 
 \begin{equation} 
 \label{eq:6.6} 
 {\mathcal G}(\Psi,Q;\bsalpha,\bsbeta) = {\mathcal {MG}}(\Psi,Q;\bsalpha,\bsbeta) + 
 {\mathcal {EG}}(\Psi,Q;\bsalpha,\bsbeta), 
 \end{equation} 
 where the main term is 
 \begin{align} 
 \label{eq:6.7} 
 {\mathcal {MG}}(\Psi,Q;\bsalpha,\bsbeta) &= \frac {Q^{1+\delta(\bsalpha,\bsbeta)}}2  
 \sum_{\substack{ {m,n =1}\\ {m\neq n} }}^{\infty}
\frac{\sigma(m;\bsalpha)\sigma(n;-\bsbeta)}{\sqrt{mn}} \sum_{\substack{{d \le D }\\ { (d,gMN)=1}}} 
   \sum_{(a,g)=1} \sum_{ b | g} 
 \sum_{ \substack{{h >0}\\  { (abh,MN)=1}}} \nonumber\\
&\hskip 1 in  \times\frac{\mu(a)\mu(b)\mu(d)}{ad\phi(abh)} 
 {\mathcal W}^{\pm}_{\bsalpha,\bsbeta}\Big( \frac{gM}{Q^{\frac 32}}, \frac{gN}{Q^{\frac 32}}; 
 \frac{Q^{\frac 12}d}{gh}\Big), 
\end{align}
and the error term is 
\begin{align}
\label{eq:6.8}
{\mathcal {EG}}(\Psi,Q;\bsalpha,\bsbeta) &= \frac {Q^{1+\delta(\bsalpha,\bsbeta)}}2  
 \sum_{\substack{ {m,n =1}\\ {m\neq n} }}^{\infty}
\frac{\sigma(m;\bsalpha)\sigma(n;-\bsbeta)}{\sqrt{mn}} \sum_{\substack{{d \le D }\\ { (d,gMN)=1}}} 
   \sum_{(a,g)=1} \sum_{ b | g} 
 \sum_{ \substack{{h >0}\\  { (abh,MN)=1}}} \nonumber\\
&\hskip 0.5in  \times\frac{\mu(a)\mu(b)\mu(d)}{ad\phi(abh)} 
\sum_{\substack{{\psi\pmod{abh}}\\ {\psi \neq\psi_0}}} 
\psi(M) \overline{\psi}(\mp N) 
{\mathcal W}^{\pm}_{\bsalpha,\bsbeta}\Big( \frac{gM}{Q^{\frac 32}}, \frac{gN}{Q^{\frac 32}}; 
 \frac{Q^{\frac 12}d}{gh}\Big).
 \end{align}

\section{ Mellin transforms of the weight function}
  
 In this section we consider three types of Mellin transforms of 
 the weight functions ${\mathcal W}_{\bsalpha,\bsbeta}^{\pm}(x,y;u)$ 
 which we shall find useful.   The three types arise by taking 
 Mellin transforms in (a) the variable $u$, (b) the variables $x$ and $y$, and (c) 
 all three variables $x$, $y$, and $u$.  We now consider each of these 
 in order, and establish some of their key properties.   
 
 Let us begin by noting that 
${\mathcal W}_{\bsalpha,\bsbeta}^\pm(x,y;u)$ is smooth in $u$, $x$ and $y$ for 
either choice of sign.   When the sign is $+$  this is clear from the 
definition in \eqref{eq:6.4}, but one may worry about the 
$|x-y|$ term when the sign is $-$.   This does not cause trouble 
because $\Psi$ is smooth and supported away from zero.  


 First we define 
 \begin{equation} 
 \label{eq:7.1} 
{\widetilde {\mathcal W}}_1^{\pm}(x,y;z ) = \int_{0}^{\infty} {\mathcal W}_{\bsalpha,\bsbeta}^{\pm} (x,y;u) u^{z} 
 \frac{du}{u}.
 \end{equation} 
 These transforms will be used in \S 9 in simplifying the contribution of 
 ${\mathcal MS}(\Psi,Q;\bsalpha,\bsbeta) + {\mathcal MG}(\Psi,Q;\bsalpha,\bsbeta)$.  
 
 \begin{lemma}
 \label{lemW1}   Given positive real numbers $x$ and $y$, the functions ${\widetilde {\mathcal W}}_1^{\pm}(x,y;z)$ are  analytic for all $z\in {\Bbb C}$.  We have the Mellin inversion formula 
 \begin{equation} 
 \label{eq:7.2}
 {\mathcal W}_{\bsalpha,\bsbeta}^{\pm}(x,y;u) = \frac{1}{2\pi i} \int_{(c)} {\widetilde {\mathcal W}}_1^{\pm}(x,y;z )
  u^{-z} dz,
  \end{equation} 
  where the integral is taken over the line Re$(z)=c$, for any real number $c$.   
  The Mellin transforms ${\widetilde {\mathcal W}}_1^{\pm}(x,y;z)$ satisfy for any 
  non-negative integer $\nu$  
  \begin{equation}
  \label{eq7.21} 
  |\widetilde {\mathcal W}_1^{\pm}(x,y;z)| \ll_\nu |x\pm y|^{-\text{Re }z} \prod_{j=1}^{\nu} |z+j|^{-1} 
\exp\Big(- c_1 \max(x,y)^{\frac 13}\Big) 
\end{equation} 
for some absolute constant $c_1$.  
 \end{lemma} 
 \begin{proof}  From the definition we have 
 $$ 
 {\widetilde {\mathcal W}_1^{\pm}}(x,y;z) = \int_0^{\infty} 
 u|x\pm y |\Psi(u|x\pm y|) V_{\bsalpha,\bsbeta}(x,y;u|x\pm y| ) u^z \frac{du}{u}. 
 $$ 
 When $x\pm y \neq 0$ (which happens always if the choice of sign is $+$) we 
 may make a change of variables $w=u|x\pm y|$, and then the 
 above becomes 
 \begin{equation} 
 \label{eq:7.22}
 |x\pm y|^{-z} \int_0^{\infty} \Psi(w) V_{\bsalpha,\bsbeta}(x,y;w) w^{z} dw.
 \end{equation} 
 Since $\Psi$ is compactly supported away from zero, the above expression 
 gives an analytic function of $z$ for all $z\in {\Bbb C}$.  
 Note that when $x-y=0$ the transform ${\widetilde {\mathcal W}}_1^{-}(x,x;z)$ is 
 identically zero.  
 
 The Mellin inversion formula \eqref{eq:7.2} is standard.  Finally, the estimate \eqref{eq7.21} 
 follows upon integrating the expression in \eqref{eq:7.22} by parts $\nu$ times, using 
 Lemma  \ref{weights} to bound the derivatives of $V_{\bsalpha,\bsbeta}$.  
 \end{proof}

 Next we define 
 \begin{equation}
 \label{eq:7.3} 
 {\widetilde {\mathcal W}}_2^{\pm}(s_1, s_2 ; u) = \int_0^{\infty} \int_0^{\infty} 
 {\mathcal W}_{\bsalpha,\bsbeta}^{\pm}(x,y;u) x^{s_1} y^{s_2} \frac{dx}{x} \frac{dy}{y}. 
 \end{equation} 
 These transforms will be used in \S 8 to estimate the error terms in \eqref{eq:6.8}.  
 
 \begin{lemma} 
 \label{lemW2}  Given a positive real number $u$, the 
 functions ${\widetilde{ \mathcal W}_2}^{\pm}(s_1,s_2; u)$ are  analytic in the 
 region Re$(s_1)$, Re$(s_2)>0$. 
 We have the Mellin inversion formula 
 $$ 
 {\mathcal W}_{\bsalpha,\bsbeta}^{\pm} (x,y;u) = \frac{1}{(2\pi i)^2} \int_{(c_1)} \int_{(c_2)} 
 \widetilde{\mathcal W}_2^{\pm}(s_1, s_2;u) x^{-s_1} y^{-s_2} ds_1 ds_2, 
 $$ 
 where $c_1$ and $c_2$ are positive.   The Mellin transforms $\widetilde{ {\mathcal W}_2}^{\pm} (s_1, s_2; u)$  satisfy, for any $k\ge 1$ 
 $$ 
 |\widetilde{ {\mathcal W}_2}^{\pm} (s_1, s_2; u) | \ll \frac{(1+u)^{k-1}}{\max(|s_1|, |s_2|)^k} \exp(-c_1 u^{-\frac 13}). 
 $$ 
 \end{lemma} 
 \begin{proof}   For a fixed positive number $u$, note that ${\mathcal W}_{\bsalpha,\bsbeta}^{\pm}(x,y;u)$ 
 is zero unless $1\le u|x\pm y|\le 2$, and decreases rapidly as $\max(x,y)$ gets large.  Therefore 
 we see that the Mellin transform $\widetilde {\mathcal W}_2^{\pm}$ is 
 analytic in the region Re$(s_1)$ and Re$(s_2)>0$.   Finally, the estimate on 
 the Mellin transform follows upon integrating by parts $k$ times.
 \end{proof}

 Finally, we define 
 \begin{equation} 
 \label{eq:7.4} 
 {\widetilde {\mathcal W}}_3^\pm(s_1,s_2;z) =  \int_0^{\infty}\int_0^{\infty} \int_0^{\infty}
 {\mathcal W}_{\bsalpha,\bsbeta}^{\pm} (x,y;u) u^z x^{s_1} y^{s_2} \frac{du}{u} \frac{dx}{x} 
 \frac{dy}{y}. 
 \end{equation} 
 Further we set 
 \begin{equation} 
 \label{eq:7.41} 
 {\widetilde {\mathcal W}}_3 ( s_1, s_2; z)  = 
 {\widetilde {\mathcal W}}_3^{+}  (s_1,s_2;z) + {\widetilde {\mathcal W}}_3^{-} (s_1, s_2;z). 
 \end{equation} 
 These transforms will play a crucial role in \S 10 where we complete the evaluation of the 
 main terms.  
 
 \begin{lemma} 
 \label{lemW3}  For brevity, set below $\omega = (s_1+s_2-z)/2$ and $\xi = (s_1-s_2+z)/2$.  
 In the region Re$(s_1)$, Re$(s_2)>0$, and $|\text{Re}(s_1-s_2)| < \text{Re}(z) <1$ 
 we have 
\begin{equation} 
\label{eq:7.42}  
{\widetilde {\mathcal W}}_3^{\pm}(s_1,s_2;z) = {\widetilde \Psi} (1+
\delta (\bsalpha,\bsbeta) +3\omega +z) \frac{H(\omega;\bsalpha,\bsbeta)}{2\omega 
\pi^{\delta(\bsalpha,\bsbeta)+3\omega}} \int_{-\infty}^{\infty} 
{\mathcal H}^{\pm}(\xi-it,z) G(\tfrac 12+\omega;\bsalpha+it ,\bsbeta+it) dt, 
\end{equation} 
where 
\begin{equation}
\label{eq:7.43} 
{\mathcal H}^+ (u,v) = \frac{\Gamma(u) \Gamma(v-u)}{\Gamma(u)}, 
\qquad \text{and} \qquad 
{\mathcal H}^{-}(u,v) = \frac{\Gamma(u)\Gamma(1-v)}{\Gamma(1+u-v)}  + 
\frac{\Gamma(v-u)\Gamma(1-v)}{\Gamma(1-u)}.  
\end{equation} 
Furthermore, we have in the same region of variables
\begin{equation} 
\label{eq:7.44}  
{\widetilde {\mathcal W}}_3 (s_1,s_2;z) = {\widetilde \Psi} (1+
\delta (\bsalpha,\bsbeta) +3\omega +z) \frac{H(\omega;\bsalpha,\bsbeta)}{2\omega 
\pi^{\delta(\bsalpha,\bsbeta)+3\omega}} \int_{-\infty}^{\infty} 
{\mathcal H} (\xi-it,z) G(\tfrac 12+\omega;\bsalpha+it ,\bsbeta+it) dt, 
\end{equation} 
where 
\begin{equation}
\label{eq:7.431} 
{\mathcal H}(u,v) = {\mathcal H}^+ (u,v)  +{\mathcal H}^{-}(u,v) =  \pi^{\frac 12}
\frac{\Gamma(\frac{u}{2})  \Gamma (\frac{1-v}{2}) \Gamma(\frac{v-u}{2})}{\Gamma (\frac{1-u}{2} )
\Gamma(\frac{v}{2}) \Gamma(\frac{1-v+u}{2})}.  
\end{equation} 
We have the Mellin inversion formula 
\begin{eqnarray}\label{eqn:W2}
\mathcal W^\pm _{\bsalpha,\bsbeta}(x,y;u)=\frac{1}{(2\pi
i)^3}\int_{z}\int_{s_1}\int_{s_2}\widetilde{\mathcal
W}_{3}^{\pm}(s_1,s_2;z) u^{-z}x^{-s_1}y^{-s_2}~ds_2 ~ds_1~dz
\end{eqnarray}
where all of the paths are taken to be the vertical lines with
increasing imaginary parts and real parts satisfying the constraints given above, and 
the integrals over $s_1$ and $s_2$ are to be interpreted as being 
over $|\text{Im} (s_1)| \le T_1$ and $|\text{Im}(s_2)| \le T_2$ and 
letting $T_1$, $T_2$ tend to infinity.  
 Finally, the Mellin transform ${\widetilde {\mathcal W}}_3(s_1,s_2;z)$ 
 satisfies the bound 
\begin{equation}
\label{eq:7.45} 
 |{\widetilde {\mathcal W}}_3(s_1,s_2;z)| \ll (1+|z|)^{-A} (1+|\omega|)^{-A} (1+|\xi|)^{\text{Re}(z)-1}. 
\end{equation}
  \end{lemma} 
 \begin{proof}  From the definitions we have 
 $$ 
 \widetilde {\mathcal W}_3^{\pm} (s_1,s_2;z) 
 = \int_0^{\infty}\int_0^{\infty} \int_0^{\infty} u |x\pm y| 
 \Psi(u|x\pm y|) V_{\bsalpha,\bsbeta}(x,y;u|x\pm y|) u^z x^{s_1}y^{s_2} 
 \frac{du}{u} \frac{dx}{x}\frac{dy}{y}. 
 $$ 
 The inner integral over $u$ is the Mellin transform ${\widetilde{ \mathcal W}_1^\pm}$ 
 and so, as in \eqref{eq:7.22}, the above equals 
 $$
 \int_0^{\infty}\int_0^{\infty} \int_0^{\infty} |x\pm y|^{-z} \Psi(w) V_{\bsalpha,\bsbeta}(x,y;w) 
 w^z x^{s_1} y^{s_2} dw \frac{dx}{x} \frac{dy}{y}. 
 $$ 
  When the sign above is $-$, the integrand is singular on the 
  diagonal $x=y$ and for $x$ and $y$ near $0$.  Note that the integral 
  is well defined provided Re$(s_1)$ and Re$(s_2)$ are positive, 
  and so long as Re$(z) < \text{Re }(s_1+s_2)$.

  Keeping to this 
  region of parameters $s_1$, $s_2$, and $z$, let us now consider the integrals 
  over $x$ and $y$, for a fixed value of $w$.  Making the substitution $x=\lambda y$ 
  we obtain that ${\widetilde {\mathcal W}_3^{\pm}}$ equals 
  $$ 
  \int_0^{\infty} \Psi(w) w^z \int_0^{\infty} |\lambda \pm 1 |^{-z} \lambda^{s_1} 
  \int_0^{\infty} V_{\bsalpha,\bsbeta}(\lambda y,y;w) y^{s_1+s_2-z}\frac{dy}{y} 
  \frac{d\lambda}{\lambda} dw.
  $$
Employing now the notation $\omega=(s_1+s_2-z)/2$ and $\xi =( s_1-s_2+z)/2$, 
  by (\ref{eqn:W}) and (\ref{eqn:3}) we have
 \begin{equation*}
 V_{\bsalpha,\bsbeta}(\lambda y,y;w)=\Big(\frac w \pi \Big)^{\delta(\bsalpha,\bsbeta)} 
  \int_{-\infty}^{\infty}  \lambda^{-it}
  \frac{1}{2\pi i}\int_{(1)} G(\tfrac 12+s;\bsalpha+it,\bsbeta+it) H(s;\bsalpha,\bsbeta) 
  \Big(\frac{\lambda y^2\pi^3}{w^3}\Big)^{-s}\frac{ds}{s} dt
 \end{equation*}
so that, by Mellin inversion,
\begin{eqnarray*}
\int_0^\infty V_{\bsalpha,\bsbeta}(\lambda y,y;w)y^{s_1+s_2-z}\frac{dy}{y}
=\Big(\frac w \pi\Big)^{\delta(\bsalpha,\bsbeta)+3\omega} \lambda^{-\omega}\frac{H(\omega;\bsalpha,\bsbeta)}{2\omega}
  \int_{-\infty}^{\infty}  \lambda^{-it}
   G(\tfrac 12+ \omega;\bsalpha+it,\bsbeta+it)~dt.
 \end{eqnarray*}
Our work thus far gives, with $\widetilde \Psi(s) =\int_0^{\infty} \Psi(w)w^{s}dw/w$, 
\begin{align} 
\label{eq:7.5} 
{\widetilde {\mathcal W}_3^{\pm} }(s_1,s_2;z) 
&= 
\frac{H(\omega;\bsalpha,\bsbeta)}{2\omega\pi^{\delta(\bsalpha,\bsbeta)+3\omega} } 
\widetilde{\Psi}(1+z+\delta(\bsalpha,\bsbeta)+3\omega)
\nonumber\\ 
&\hskip 1 in \times \int_0^\infty |\lambda\pm 1|^{-z} \lambda^{s_1 -\omega} \int_{-\infty}^{\infty}\lambda^{-it} G(\tfrac 12+\omega;\bsalpha+it,\bsbeta+it) dt \frac{d\lambda}{\lambda}.
\end{align}

We now evaluate the integral over $\lambda$ above using the familiar beta-integral.  
Precisely, for any complex numbers $u$ and $v$ with $0< \text{Re}(u) <\text{Re}(v) <1$ 
we have
\begin{align}
\label{eq:7.6}
\int_0^\infty \lambda ^u |1-\lambda|^{-v} ~\frac{d\lambda}{\lambda}
&= \int_0^1 \lambda ^{u-1}(1-\lambda)^{-v}~d\lambda+\int_0^1
\lambda^{v-u-1}(1-\lambda)^{-v}~d\lambda \nonumber\\
&=\frac{\Gamma(u)\Gamma(1-v)}{\Gamma(1+u-v)}+\frac{\Gamma(v-u)\Gamma(1-v)}{\Gamma(1-u)} 
= {\mathcal H}^{-}(u,v)
\end{align}
and
\begin{align}
\label{eq:7.7}
\int_0^\infty \lambda ^u|1+\lambda|^{-v} ~\frac{d\lambda}{\lambda}
&=\int_1^\infty (\lambda-1)^{u-1} \lambda^{-v} ~ d\lambda\nonumber
\\
&=  \int_0^1\lambda^{v-u-1}(1-\lambda)^{u-1}d\lambda\nonumber\\
&= \frac{\Gamma(v-u)\Gamma(u)}{\Gamma(v)} = {\mathcal H}^{+}(u,v).
\end{align}
These remarks establish the result given in \eqref{eq:7.42}.   The 
expression given in \eqref{eq:7.431} follows upon summing over both 
choices of sign and employing the following remarkable identity: (see Lemma 8.2 of Young \cite{You}) 
\begin{align}
\label{eq:7.7}
\frac{\Gamma(u)\Gamma(1-v)}{\Gamma(1+u-v)}+\frac{\Gamma(v-u)\Gamma(1-v)}{\Gamma(1-u)}
+\frac{\Gamma(v-u)\Gamma(u)}{\Gamma(v)} 
&=\pi^{\frac 12} \frac{\Gamma(\frac{1-v}{2}) \Gamma(\frac u
2 ) \Gamma(\frac{v-u}{2})}{\Gamma( \frac v2) \Gamma(\frac
{1-u}{2})\Gamma(\frac {1-v+u}{2})}.
\end{align} 

Finally, using Stirling's formula in \eqref{eq:7.44} we may obtain the estimate for ${\widetilde {\mathcal W}}_3$ 
given in \eqref{eq:7.45}.  
\end{proof}

\section{Estimating the error term $\mathcal {EG}(\Psi,Q;\bsalpha,\bsbeta)$}

First we dispense with the contribution to \eqref{eq:6.8}  from values of $a$ larger than $2Q$.  
Since $M \neq N$, if $M\equiv \mp N \pmod {abh}$ and $a>2Q$ then 
$$ 
\frac{Q^{\frac 12}d}{gh} \frac{|gM\pm gN|}{Q^{\frac 32}} \ge \frac{dab}{Q} \ge 2, 
$$ 
so that ${\mathcal W}_{\bsalpha,\bsbeta}^{\pm} (gM/Q^{\frac 32},gN/Q^{\frac 32};Q^{\frac 12}d/(gh))=0$ 
for such terms.  Therefore the contribution to \eqref{eq:6.8} from terms 
with $a>2Q$ is 
$$ 
\ll Q^{1+\epsilon} 
\sum_{m,n=1}^{\infty} \frac{1}{(mn)^{\frac 12-\epsilon}} 
\sum_{d\le D} \sum_{a>2Q} \sum_{b|g} \sum_{h>0} 
\frac{1}{ad\phi(abh)}\Big |{\mathcal W}_{\bsalpha,\bsbeta}^{\pm}
\Big(\frac{m}{Q^{\frac 32}}, \frac{n}{Q^{\frac 32}}; \frac{Q^{\frac 12}d}{gh}\Big)\Big|.
$$
Since ${\mathcal W}_{\bsalpha,\bsbeta}^{\pm} (gM/Q^{\frac 32},gN/Q^{\frac 32};Q^{\frac 12}d/(gh))$ 
is small unless $m$ and $n$ are below $Q^{\frac 32+\epsilon}$ 
and $|m\pm n| d/(Qgh) \in [1,2]$, we find that the above is 
$$ 
\ll Q^{1+\epsilon} \sum_{a>2Q} \frac{1}{a^{2-\epsilon}} \sum_{m, n=1}^{\infty} 
\frac{1}{(mn)^{\frac 12-\epsilon}} \exp\Big( -c\Big(\frac{\max(m,n)}{Q^{\frac 32}}\Big)^{\frac13}\Big) 
\ll Q^{\frac 32+\epsilon}.
$$

We may therefore restrict attention to the terms with $a\le 2Q$.   Here
we use the two variable Mellin transform ${\widetilde {\mathcal W}}_2(s_1,s_2;u)$ 
together with Mellin inversion.  Thus the remainder term in \eqref{eq:6.8} may be written as 
\begin{align}
\label{eq:8.1}
\frac{Q^{1+\delta(\bsalpha,\bsbeta)}}{2} 
\sum_{\substack{{a\le 2Q} \\ {b, h>0}}} \sum_{\substack{{\psi \pmod {abh}}\\ {\psi\neq \psi_0}}} &
\sum_{\substack{{g}\\{b|g, (a,g)=1}}}  \sum_{\substack {{d\le D} \\ {(d,g)=1}}} 
\frac{\mu(a)\mu(b)\mu(d)}{adg\phi(abh)} 
  \frac{1}{(2\pi i)^2} \int_{(\frac 12+\epsilon)} \int_{(\frac 12+\epsilon)} 
{\widetilde {\mathcal W}}_2^{\pm} \Big(s_1,s_2; \frac{Q^{\frac 12}d}{gh}\Big) \nonumber \\ 
&\times \Big(\frac{Q^{\frac 32}}{g}\Big)^{s_1+s_2} 
\sum_{\substack {{M,N=1} \\ {M\neq N, (M,N)=1} }}^{\infty} 
\frac{\sigma(gM;\bsalpha)\sigma(gN;-\bsbeta)}{M^{\frac 12+s_1} N^{\frac 12+s_2}} \psi(M) 
\overline{\psi}(\mp N) ds_1 ds_2.
 \end{align} 
 
The inner sum over $M$ and $N$ in \eqref{eq:8.1}  
may be written as 
$$ 
\psi(\mp 1) \Big( \prod_{j=1}^{3} L(\tfrac 12+ s_1 +\alpha_j,\psi) L(\tfrac 12+s_2 -\beta_j,\overline\psi) 
{\mathcal F}(g,\psi;s_1,s_2)- \sigma(g;\bsalpha)\sigma(g;-\bsbeta)\Big),
$$ 
for a suitable function ${\mathcal F}$ which is analytic when Re$(s_1)$ and Re$(s_2)$ are 
$>\epsilon$ and is bounded there by $|g|^{\epsilon}$.  
Since $\psi$ is not principal, the $L$-functions above are non-trivial 
and have no poles.  Thus we may move the lines of integration to Re$(s_1) =\text{Re}(s_2) 
=\epsilon$.  In this way, we find that the quantity in \eqref{eq:8.1} is bounded by 
\begin{align}
\label{eq:8.2} 
&\ll Q^{1+\epsilon} \sum_{\substack{{a\le 2Q} \\ {b, h>0}}} \sum_{\substack{{\psi \pmod {abh}}\\ {\psi\neq \psi_0}}} 
\sum_{\substack{{g}\\{b|g, (a,g)=1}}}  \sum_{\substack {{d\le D} \\ {(d,g)=1}}} 
\frac{1}{adg \phi(abh)}\nonumber \\
&\hskip 0.2 in \times  \int_{(\epsilon)} \int_{(\epsilon)}  
\Big|{\widetilde{ \mathcal W}}_2^{\pm} \Big(s_1, s_2;\frac{ Q^{\frac 12 }d}{gh}\Big)\Big| 
\Big(1+\prod_{j=1}^{3} |L(\tfrac 12+s_1+\alpha_j,\psi)L(\tfrac 12+s_2-\beta_j,\overline{\psi})|\Big) ds_1 ds_2.
\end{align}
 
 Using the bound in Lemma \ref{lemW2} we obtain that for any $k\ge 1$ 
 \begin{align*}
 \sum_{\substack{{g} \\ {b|g}}} \frac{1}{g} 
 \sum_{d\le D} \frac{1}{d}\Big |\widetilde{ \mathcal W}_2^{\pm} \Big(s_1,s_2 ; 
 \frac{Q^{\frac 12}d}{gh}\Big) \Big| &\ll \frac{(1+Q^{\frac 12}D/(bh))^{k-1}}{\max(|s_1|,|s_2|)^k} 
 (\log D) \sum_{b|g} \frac 1g \exp\Big( -c \Big( \frac{gh}{Q^{\frac 12}D}\Big)^{\frac 13}\Big) 
 \\
 &\ll  \frac{(1+Q^{\frac 12}D/(bh))^{k-1}}{\max(|s_1|,|s_2|)^k} \frac{Q^{\epsilon}}{b} 
 \exp\Big( -c \Big( \frac{bh}{Q^{\frac 12}D}\Big)^{\frac 13}\Big).  
\end{align*}
We use this in \eqref{eq:8.2}, and divide the remaining variables $a$, $b$, $h$, $s_1$ and $s_2$  
into dyadic blocks -- say $A\le a < 2A$, $B\le b<2B$, $H\le h<2H$, $S_1 \le |s_1|<2S_1$ 
and $S_2 \le |s_2| <2S_2$.  Consider now the contribution of such a dyadic block 
to \eqref{eq:8.2}.   For any fixed $k\ge 1$ this is, writing $\ell = abh$ 
\begin{align*}
&\ll \frac{Q^{1+\epsilon} (1+Q^{\frac 12}D/(BH))^{k-1}}{A^{2-\epsilon} B^{2-\epsilon} H^{1-\epsilon}
\max(S_1, S_2)^k} \exp \Big( -c\Big(\frac{BH}{Q^{\frac 12}D}\Big)^{\frac 13}\Big) 
\sum_{ABH \le \ell < 8ABH} 
\sum_{\substack{{\psi\pmod \ell}\\ {\psi \neq \psi_0}} } \\
&\times \int_{\substack{{(\epsilon)} \\ {S_1 \le |s_1|<2S_1}} }\int_{\substack{{(\epsilon)}\\ {S_2 
\le |s_2| < 2S_2}}} 
\Big(1+\sum_{j=1}^{3}( |L(\tfrac 12+s_1+\alpha_j,\psi)|^6 +L(\tfrac 12+s_2-\beta_j,\overline{\psi})|^6)\Big) ds_1 ds_2.
\end{align*}
 Using the large sieve we see that the sums and integrals above contribute 
 $$ 
 \ll \Big( A^2 B^2 H^2 S_1S_2 + (ABH \max(S_1,S_2))^{\frac 32} \min(S_1,S_2)\Big)^{1+\epsilon}. 
$$ 
 We take $k=1$ if $\max(S_1, S_2) \le 1+Q^{\frac 12}D/(BH)$ and $k=4$ otherwise.  
 Summing over all dyadic blocks (and keeping in mind that $A\le Q$) we 
 obtain (with a little calculation) a net estimate 
 of 
 $$ 
 \ll Q^{\frac 32+\epsilon} D + Q^{\frac 74+\epsilon} D^{\frac 32}, 
 $$ 
 with the worst case scenario being when $A$, $B$ and $H$ are small and  $\max(S_1,S_2)$ 
 being of size $Q^{\frac 12}D$.

 Thus we have established that 
 \begin{equation} 
 \label{eq:8.3} 
 {\mathcal EG}(\Psi,Q;\bsalpha,\bsbeta) \ll Q^{\frac 74+\epsilon} D^{\frac 32}. 
 \end{equation} 
 

\section{The main terms: ${\mathcal {MS}}(\Psi,Q;\bsalpha,\bsbeta)+ 
{\mathcal {MG}}(\Psi,Q;\bsalpha,\bsbeta)$}

\noindent We start by simplifying the expression for ${\mathcal {MG}}(\Psi,Q;\bsalpha,\bsbeta)$ 
which is given in \eqref{eq:6.7}.   Put 
\begin{equation} 
\label{eq:9.1}
F(h,g;MN) = \sum_{(a,gMN)=1} \frac{\mu(a)}{a} \sum_{\substack{{b|g} \\ {(b,MN)=1}}} 
\frac{\mu(b)}{\phi(abh)} = 
\sum_{(\ell,MN)=1} \frac{\mu(\ell) (\ell,g)}{\ell\phi(\ell h)}, 
\end{equation} 
where the last identity follows upon grouping $\ell=ab$.  Then ${\mathcal {MG}}(\Psi,Q;\bsalpha,\bsbeta)$
equals
\begin{equation} 
\label{eq:9.2}
\frac{Q^{1+\delta(\bsalpha,\bsbeta)}}{2} 
\sum_{\substack{{m,n=1}\\ {m\neq n}}}^{\infty} 
\frac{\sigma(m;\bsalpha)\sigma(n;-\bsbeta)}{\sqrt{mn}} 
\sum_{\substack{{d\le D}\\ {(d,gMN)=1}}} \sum_{\substack{{h>0}\\ {(h,MN)=1}}} 
\frac{\mu(d)}{d} F(h,g;MN) {\mathcal W}_{\bsalpha,\bsbeta}^{\pm} 
\Big( \frac{gM}{Q^{\frac 32}},\frac{gN}{Q^{\frac 32}};\frac{Q^{\frac 12}d}{gh}\Big). 
\end{equation} 

Now we use the Mellin transform $\widetilde{\mathcal W}_1^{\pm} (x,y;z)$ 
given in \eqref{eq:7.1} together with the Mellin inversion formula and the 
estimates of Lemma \ref{lemW1}.  
Using \eqref{eq:7.2} with $c=-\epsilon <0$, we find that the sum over $h$ in \eqref{eq:9.2} is 
\begin{equation} 
\label{eq:9.5}
\sum_{\substack{{h=1}\\{(h,MN)=1}}}^{\infty} F(h,g;MN) 
\frac{1}{2\pi i} \int_{(-\epsilon)} 
\widetilde {\mathcal W}_{1}^{\pm} \Big(\frac{gM}{Q^{\frac 32}},\frac{gN}{Q^{\frac 32}}; 
z\Big) \Big(\frac{Q^{\frac 12}d}{gh}\Big)^{-z} dz.
\end{equation} 
We interchange the sum and the integral, and this is legitimate 
as the sum over $h$ converges absolutely when the real part of $z$ is negative.  

\begin{lemma} 
\label{lemma9.1} 
If Re$(s)>0$ we have 
$$ 
\sum_{\substack{{h=1}\\{(h,MN)=1} }}^{\infty} \frac{F(h,g;MN)}{h^s} 
= \zeta(s+1) {\mathcal K}(s;g,MN)
$$ 
where, with $\phi(\ell,s)=\prod_{p|\ell}(1-1/p^s)$, 
$$ 
{\mathcal K}(s;g,MN) = \phi(MN,s+1) \prod_{p\nmid gmN} \Big(1-\frac{1}{p(p-1)} 
+\frac{1}{p^{1+s}(p-1)} \Big) \prod_{\substack{{p|g}\\ {p\nmid MN}} }
\Big(1-\frac{1}{p^{1+s}}  - \frac{1}{p-1} \Big(1-\frac 1{p^s}\Big)\Big).
$$ 
\end{lemma} 
\begin{proof}  This is a straightforward verification.  
\end{proof}

Using Lemma \ref{lemma9.1} in \eqref{eq:9.5} we obtain that the 
quantity there equals 
\begin{equation} 
\label{eq:9.6}
\frac{1}{2\pi i} \int_{(-\epsilon)} \widetilde{\mathcal W}_{1}^{\pm} 
\Big( \frac{gM}{Q^{\frac 32}}, \frac{gN}{Q^{\frac 32}}; z\Big) \zeta(1-z) {\mathcal K}(-z;g,MN) 
\Big(\frac{Q^{\frac 12}d}{g}\Big)^{-z} dz. 
\end{equation}
We now move the line of integration to the right, to the line Re$(z)=\epsilon>0$.  We 
encounter a pole at $z=0$, whose residue (taking into account that the  
contour is oriented clockwise, and that the residue of $\zeta(1-z)$ at $z=0$ is $-1$) equals 
\begin{align} \label{eq:9.7}
  \widetilde{\mathcal W}_{1}^{\pm} 
\Big( \frac{gM}{Q^{\frac 32}}, \frac{gN}{Q^{\frac 32}}; 0\Big) {\mathcal K}(0;g,MN) 
&= \frac{\phi(gMN)}{gMN} \int_{0}^{\infty} {\mathcal W}_{\bsalpha,\bsbeta}^{\pm} 
\Big( \frac{gM}{Q^{\frac 32}}, \frac{gN}{Q^{\frac 32}}; u\Big) \frac{du}{u}\nonumber\\
&= \frac{\phi(gMN) }{gMN}\int_0^{\infty} u |x\pm y| \Psi(u|x\pm y|) V_{\bsalpha,\bsbeta}(x,y;u|x\pm y|) 
\frac{du}{u} \nonumber \\ 
&=\frac{\phi(gMN)}{gMN} \int_0^{\infty} \Psi(u) V_{\bsalpha,\bsbeta}\Big(\frac{gM}{Q^{\frac 32}}, \frac{gN}{Q^{\frac 32}} ; u\Big) du,
\end{align}
where in the middle line we wrote $x=gM/Q^{\frac 32}$ and $y=gN/Q^{\frac 32}$ for brevity. 
Thus the quantity in \eqref{eq:9.6} equals the residue term above, 
together with the remaining integral on the line Re$(z)=\epsilon$.

Consider first the contribution of the residue term \eqref{eq:9.7} 
to ${\mathcal {MG}}(\Psi,Q;\bsalpha,\bsbeta)$ -- we shall show that 
this contribution cancels the contribution of $\mathcal {MS}(\Psi,Q;\bsalpha,\bsbeta)$.  
Keeping in mind that the sum in \eqref{eq:9.2} is over both choices of 
sign $\pm$, and since $\phi(gMN)/(gMN) = \phi(mn)/(mn)$, 
we see that the residue term \eqref{eq:9.7} contributes 
\begin{equation} 
\label{eq:9.8}
Q^{1+\delta(\bsalpha,\bsbeta)} \sum_{\substack{{m, n=1}\\ {m\neq n}}}^{\infty} 
\frac{\sigma(m;\bsalpha)\sigma(n;-\bsbeta)}{\sqrt{mn}} 
\sum_{\substack{{d\le D} \\ {(d,mn)=1}}} \frac{\mu(d)}{d} \frac{\phi(mn)}{mn} 
\int_0^{\infty} \Psi(u) V_{\bsalpha,\bsbeta}\Big( \frac{m}{Q^{\frac 32}}, 
\frac{n}{Q^{\frac 32}};u \Big) du. 
\end{equation} 
Now consider $\mathcal {MS}(\Psi,Q;\bsalpha,\bsbeta)$ which we recall from Proposition 3 
equals 
$$ 
- \sum_{\substack{{m,n=1}\\ {m\neq n}}} \frac{\sigma(m;\bsalpha)\sigma(n;-\bsbeta)}{\sqrt{mn}} 
\sum_{\substack{{d\le D}\\ {(d,mn)=1}}} \mu(d) \sum_{\substack{{r}\\ {(r,mn)=1}}} 
\Psi\Big( \frac{dr}{Q}\Big) V_{\bsalpha,\bsbeta}(m,n;dr). 
$$ 
Since 
$$ 
\sum_{\substack{{r\le x}\\ {(r,mn)=1}}} 1 = \frac{\phi(mn)}{mn} x +O( (mn)^{\epsilon}),
$$ 
and $V_{\bsalpha,\bsbeta}$ is small unless $mn \le Q^{3+\epsilon}$ we 
find that the above is 
\begin{equation*} 
  - Q \sum_{\substack{{m,n=1}\\ {m\neq n}}}^{\infty} 
  \frac{\sigma(m;\bsalpha)\sigma(n;-\bsbeta)}{\sqrt{mn}} 
  \sum_{\substack{{d\le D}\\ {(d,mn)=1}} } \frac{\mu(d)}{d} 
  \frac{\phi(mn)}{mn} \int_0^\infty \Psi(u)
V_{\bsalpha,\bsbeta}(m,n;uQ) du+O(DQ^{\frac 32+\epsilon}).
\end{equation*}
Since $V_{\bsalpha,\bsbeta}(m,n;uQ) = Q^{\delta(\bsalpha,\bsbeta)} V_{\bsalpha,\bsbeta}(mQ^{-\frac 32}, nQ^{-\frac 32};u)$ the main term above exactly cancels the quantity in 
\eqref{eq:9.8}!

 Summarizing our discussions above we have established that ${\mathcal {MS}}(\Psi,Q;\bsalpha,\bsbeta) +  {\mathcal {MG}}(\Psi,Q;\bsalpha,\bsbeta)$ equals, up to an error $O(D Q^{\frac 32+\epsilon})$, 
 \begin{align}
 \label{eq:9.9}
& \frac{ Q^{1+\delta(\bsalpha,\bsbeta)}}{2} 
 \sum_{\substack{{m,n=1}\\ {m\neq n}}}^{\infty} 
 \frac{\sigma(m;\bsalpha)\sigma(n;-\bsbeta)}{\sqrt{mn} }\nonumber \\
 &\hskip 1 in \times 
 \frac{1}{2\pi i} 
 \int_{(\epsilon)} \widetilde{\mathcal W}_{1}^{\pm} 
 \Big( \frac{gM}{Q^{\frac 32}},\frac{gN}{Q^{\frac 32}};z\Big) 
 \zeta(1-z) {\mathcal K}(-z;g,MN) \Big( \frac{Q^{\frac 12} }{g}\Big)^{-z} \sum_{\substack {{d\le D}\\ {(d,gMN)=1}}} \frac{\mu(d)}{d^{1+z}} dz. 
 \end{align}
 
We move the line of integration above to Re$(z)=1-\epsilon$, and 
then extend the sum over $d$ to include all values of $d$.  Using Lemma \ref{lemW1} the error in 
extending our sum over $d$ is 
\begin{align*}
&\ll Q^{1+\epsilon} \sum_{\substack{ {m,n=1} \\ {m\neq n} } }^{\infty} 
\frac{1}{(mn)^{\frac 12-\epsilon}} 
\int_{(1-\epsilon)} \Big|\frac{m\pm n}{Q^{\frac 32}}\Big|^{-1+\epsilon} 
|z|^{-10} \exp(- c_1 (\max(m,n)/Q^{\frac 32})^{\frac 13}) \Big(\frac{Q^{\frac 12}}{g}\Big)^{
-1+\epsilon} D^{-1+\epsilon} |dz|
 \\
 &\ll Q^{2+\epsilon} D^{-1}.\\
 \end{align*}
 Moving now the line of integration back to Re$(z)=\epsilon$, we conclude that with an error $O(Q^{2+\epsilon}D^{-1}+DQ^{\frac 32+\epsilon})$ 
the quantity ${\mathcal {MS}}(\Psi,Q;\bsalpha,\bsbeta) +  {\mathcal {MG}}(\Psi,Q;\bsalpha,\bsbeta)$ equals 
\begin{align}
 \label{eq:9.10}
& \frac{ Q^{1+\delta(\bsalpha,\bsbeta)}}{2} 
 \sum_{\substack{{m,n=1}}}^{\infty} 
 \frac{\sigma(m;\bsalpha)\sigma(n;-\bsbeta)}{\sqrt{mn} }\nonumber \\
 &\hskip 1 in \times 
 \frac{1}{2\pi i} 
 \int_{(\epsilon)} \widetilde{\mathcal W}_{1}^{\pm} 
 \Big( \frac{gM}{Q^{\frac 32}},\frac{gN}{Q^{\frac 32}};z\Big) 
 \frac{\zeta(1-z) {\mathcal K}(-z;g,MN)}{\zeta(1+z) \phi(gMN,1+z)} \Big( \frac{Q^{\frac 12} }{g}\Big)^{-z} \  dz. 
 \end{align}
Note that above we reintroduced the terms $m=n$ with an acceptable error of $O(Q^{1+\epsilon})$.

\section{Identifying the non-diagonal main terms}

We now consider, thinking of $z$ as fixed, the sum over $m$ and $n$ in \eqref{eq:9.10} 
above.   We use the three variable Mellin transform ${\widetilde {\mathcal W}_3^{\pm}}$
discussed in Lemma \ref{lemW3}.   Since we are summing 
over both choices of sign, the quantity in \eqref{eq:9.10} may be written as 
\begin{align} 
\label{eq:10.0}
\frac{Q^{1+\delta(\bsalpha,\bsbeta)}}{2} \frac{1}{(2\pi i)^3} \int_{(\epsilon)} 
\int_{(\frac 12+\epsilon)} \int_{(\frac 12+\epsilon)}   &
{\widetilde {\mathcal W}_3}(s_1,s_2;z)  
\frac{\zeta(1-z)}{\zeta(1+z)} Q^{\frac 32(s_1+s_2)-\frac z2} \nonumber\\
&\sum_{\substack{{m,n=1}}}^{\infty} 
\frac{\sigma(m;\bsalpha)\sigma(n;-\bsbeta)}{m^{\frac 12+s_1} n^{\frac 12+s_2}} 
\frac{g^z{\mathcal K}(-z;g,MN)}{\phi(gMN,1+z)} ds_2 ds_1 dz. 
 \end{align} 
Thus we are led to consider 
\begin{equation} 
\label{eq:10.1} 
\mathcal F(s_1,s_2;z) = 
\sum_{m,n =1}^{\infty} \frac{\sigma(m;\bsalpha)\sigma(n;-\bsbeta)}{m^{\frac 12+s_1} n^{\frac 12+s_2}} 
\frac{g^{z}}{\phi(gMN,1+z)} \mathcal K(-z;g, MN). 
\end{equation}  
The sum over $m$ and $n$ above has an obvious multiplicative structure, and 
we can therefore write 
$$ 
\mathcal F(s_1,s_2;z) = \prod_{p} {\mathcal F}_p(s_1,s_2;z), 
$$ 
where, recalling the definition of ${\mathcal K}$ from Lemma \ref{lemma9.1},  
\begin{align*}
\mathcal{F}_p(s_1,s_2;z) &= 
1+\frac{(p^z-1)}{p(p-1)}+
\sum_{\substack{{a,b \ge 0} \\ {\max(a,b)\ge 1} }} 
\frac{\sigma(p^a;\bsalpha)\sigma(p^b;-\bsbeta)}{p^{a(\frac 12+s_1)} p^{b(\frac 12+s_2)}} 
p^{z\min(a,b)} \frac{1-1/p^{1-z}}{1-1/p^{1+z}}  
\\
&+\frac{1}{(p-1)} \frac{p^z-1}{1-1/p^{1+z}}  \sum_{k=1}^{\infty} \frac{\sigma(p^k;\bsalpha)\sigma(p^k;-\bsbeta)}{p^{k(1+s_1+s_2-z)}}.
\end{align*}

The behavior of ${\mathcal F}_p$ is dominated by the contributions from $(a,b)=(1,0)$, $(0,1)$, 
$(1,1)$ and $k=1$  terms above.  Thus we write 
\begin{align*}
{\mathcal F}(s_1,s_2;z) &= \zeta(2-z) \prod_{j=1}^{3} 
\frac{\zeta(\tfrac 12+s_1+\alpha_j)}{\zeta(\tfrac 12+s_1+ \alpha_j+1-z)} 
\prod_{\ell=1}^{3} 
\frac{\zeta(\tfrac 12+s_2-\beta_\ell)}{\zeta(\tfrac 12+s_2 -\beta_\ell +1-z)}  \\
&\times\prod_{j,\ell=1}^{3} \zeta(1+s_1+s_2-z+\alpha_j-\beta_\ell) {\mathcal G}(s_1,s_2;z).
\end{align*}
Here ${\mathcal G}(s_1,s_2;z) = \prod_p {\mathcal G}_p(s_1,s_2;z)$ is absolutely 
convergent in a wider region  of $s_1$, $s_2$ and $z$.   

For a fixed value of $z$ with Re$(z)=\epsilon$, ${\mathcal F}(s_1,s_2;z)$ has 
 nine poles at $s_1 =\tfrac 12- \alpha_j$ and $s_2 =\tfrac 12 +\beta_\ell$.  
Keeping $z$ fixed, we move the lines of integration in $s_1$ and $s_2$ 
to Re$(s_1) =2\epsilon$ and Re$(s_2)=2\epsilon$, and pick up the 
residues at these poles, and the integrals on the remaining lines is acceptably small.   
To move the lines of integration carefully, we first truncate the integrals in $s_1$ and $s_2$ at 
a height $T=Q^2$ and then move the line of integration over $s_2$ first and then 
the line over $s_1$.  We use the estimate \eqref{eq:7.45} and find that 
the accumulated error may be bounded by $O(Q^{\frac 74+\epsilon})$.

Now we work out the contribution of the residues.  As noted above, there 
are nine such terms and for simplicity we shall work out the contribution 
from $s_1=\tfrac 12-\alpha_1$ and $s_2=\tfrac 12+\beta_1$, the other 
cases being similar.   Our goal is to show that this term contributes 
\begin{equation} 
\label{eq:10.11} 
H(0;\bsalpha,\bsbeta) \sum_{q} \Psi\Big(\frac qQ\Big) \phi^{\flat}(q) \int_{-\infty}^{\infty} {\mathcal Q}(q;\pi(\bsalpha)+iy, 
\pi(\bsbeta)+iy) dy + O(Q^{}), 
\end{equation} 
where $\pi \in S_6/(S_3\times S_3)$ is the permutation with $\pi(\bsalpha) = (\beta_1,\alpha_2,\alpha_3)$ and $\pi(\bsbeta) = (\alpha_1,\beta_2,\beta_3)$.   

The residue of ${\mathcal F}(s_1,s_2;z)$  equals 
\begin{equation} 
\label{eq:10.2}
\prod_{j=2}^{3} \frac{\zeta(1+\alpha_j-\alpha_1) \zeta(1+\beta_1-\beta_j)}{\zeta(2+\alpha_j-\alpha_1-z) 
\zeta(2-\beta_j+\beta_1-z)} \prod_{\substack{{j,\ell=1} \\ {(j,\ell)\neq (1,1)}}}^{3} 
\zeta(2+\beta_1-\alpha_1 -z+\alpha_j-\beta_\ell) {\mathcal G}(\tfrac 12-\alpha_1,\tfrac12+\beta_1;z). 
\end{equation}
We use this in \eqref{eq:10.0}, and move the line of integration in $z$ to Re$(z)= \frac 32-\epsilon$.  
In doing so, we encounter a simple pole at $z= 1-\alpha_1+\beta_1$ (from the 
${\widetilde {\mathcal W}}_3(\tfrac 12-\alpha_1,\tfrac 12+\beta_1;z)$ term) and the residue here 
is the dominant contribution.  Note that there are potential poles at $z= 1-\alpha_1+\beta_1+\alpha_j-\beta_\ell$ (for $(j,\ell)\neq (1,1)$) but these are offset by the corresponding zeros of 
$H((1-\alpha_1+\beta_1-z)/2;\bsalpha,\bsbeta)$ at these points.  
Taking the residue at $z=1-\alpha_1+\beta_1$, the expression \eqref{eq:10.2} 
simplifies to 
\begin{equation} 
\label{eq:10.3} 
\frac{{\mathcal Z}(\tfrac 12;\pi(\bsalpha),\pi(\bsbeta))}{\zeta(1+\beta_1 -\alpha_1)} {\mathcal G}(\tfrac 12-\alpha_1,\tfrac 12+\beta_1;1+\beta_1-\alpha_1). 
\end{equation}
Moreover, the residue of 
$$ 
\frac{Q^{1+\delta(\bsalpha,\bsbeta)}}{2} \widetilde{\mathcal W}_3(\tfrac 12-\alpha_1,\tfrac 12+\beta_1; z) 
\frac{\zeta(1-z)}{\zeta(1+z)} Q^{\frac32 (1+\beta_1-\alpha_1) -\frac z2} 
$$ 
at $z=1-\alpha_1+\beta_1$ gives, using \eqref{eq:7.44}, 
\begin{align*} 
\frac{Q^{2+\delta(\pi(\bsalpha),\pi(\bsbeta))}\zeta(\alpha_1-\beta_1)}{2 \pi^{\delta(\bsalpha,\bsbeta)}\zeta(2-\alpha_1+\beta_1)} 
&{\widetilde \Psi}(2+\delta(\pi(\bsalpha),\pi(\bsbeta)))  {H(0;\bsalpha,\bsbeta)}\nonumber \\ 
&\int_{-\infty}^{\infty} G(\tfrac 12;\bsalpha+it,\bsbeta+it) {\mathcal H}(\tfrac 12-\alpha_1-it,1-\alpha_1+\beta_1) dt.
\end{align*} 
Using now \eqref{eq:7.431} and the functional equation connecting $\zeta(\alpha_1-\beta_1)$ and 
$\zeta(1-\alpha_1+\beta_1)$, the above simplifies to give
\begin{align}
\label{eq:10.4}
\frac{Q^{2+\delta(\pi(\bsalpha),\pi(\bsbeta))}\zeta(1-\alpha_1+\beta_1)}{2 \pi^{\delta(\pi(\bsalpha),
\pi(\bsbeta))}\zeta(2-\alpha_1+\beta_1)} 
&{\widetilde \Psi}(2+\delta(\pi(\bsalpha),\pi(\bsbeta)))  {H(0;\bsalpha,\bsbeta)}\nonumber \\ 
&\times  \int_{-\infty}^{\infty}G(\tfrac 12;\pi(\bsalpha)+it,\pi(\bsbeta)+it)dt.
\end{align}
Combining \eqref{eq:10.3} and \eqref{eq:10.4} we obtain that the contribution of this term to \eqref{eq:10.3} is 
\begin{align} 
\label{eq:10.5} 
 \frac{Q^{2+\delta(\pi(\bsalpha),\pi(\bsbeta))}}{2\pi^{\delta(\pi(\bsalpha),\pi(\bsbeta))}} 
 &{\mathcal Z}(\tfrac 12;\pi(\bsalpha),\pi(\bsbeta)) \widetilde{\Psi}(2+\delta(\pi(\bsalpha),\pi(\bsbeta))) 
 H(0;\bsalpha,\bsbeta)\nonumber\\
&\times \frac{{\mathcal G}(\tfrac 12-\alpha_1,\tfrac 12+\beta_1;1+\beta_1-\alpha_1)}{\zeta(2-\alpha_1+\beta_1)}  \int_{-\infty}^{\infty} G(\tfrac 12;\pi(\bsalpha)+it,\pi(\bsbeta)+it) dt.
 \end{align}

It remains now to match up the quantity in \eqref{eq:10.5} above with our desired object in 
\eqref{eq:10.11}.  Let us begin by first simplifying the expression in 
\eqref{eq:10.11}.  The main term there equals 
$$ 
H(0;\bsalpha,\bsbeta) \sum_{q} \Psi\Big(\frac qQ\Big) \phi^{\flat}(q) 
\Big(\frac{q}{\pi}\Big)^{\delta(\pi(\bsalpha),\pi(\bsbeta))} 
\frac{{\mathcal {AZ}}}{{\mathcal B}_q} (\tfrac 12;\pi(\bsalpha),\pi(\bsbeta)) 
\int_{-\infty}^{\infty} G(\tfrac 12; \pi(\bsalpha)+it, \pi(\bsbeta)+it) dt.  
$$ 
Using that $\phi^{\flat}(q) = \tfrac 12 \phi^*(q) + O(1)$, and the 
function $\phi^*$ is multiplicative with $\phi^*(p)=p-2$ and $\phi^{*} (p^k) = p^{k-2}(p-1)^2$ 
for $k\ge 2$, we may evaluate using a standard contour shift argument the sum over 
$q$ above.  Thus the above becomes 
\begin{align}
\label{eq:10.6} 
H(0;\bsalpha,\bsbeta)&\Big( \int_{-\infty}^{\infty} G(\tfrac 12; \pi(\bsalpha)+it, \pi(\bsbeta)+it) dt \Big)
\frac{Q^{2+\delta(\pi(\bsalpha),\pi(\bsbeta))}}{2\pi^{\delta(\pi(\bsalpha),\pi(\bsbeta))}} 
{\widetilde \Psi}(2+\delta(\pi(\bsalpha),\pi(\bsbeta)))\nonumber \\
&\times {\mathcal {AZ}}(\tfrac 12;\pi(\bsalpha),\pi(\bsbeta)) 
\prod_p \Big( 1-\frac 1p \Big) \Big( 1+ \frac 1{{\mathcal B}_p(\tfrac 12;\pi(\bsalpha),\pi(\bsbeta))} \Big(\frac 1p -\frac 1{p^2}-\frac 1{p^3}
\Big)\Big). 
\end{align}

Comparing \eqref{eq:10.5} and \eqref{eq:10.6} we note that many of the terms 
match up, and what remains is to match up the Euler products on both sides.  
For this it suffices to check that the Euler factors at each prime match up.   This 
entails checking whether 
$$
\frac{{\mathcal G}_p(\frac 12-\alpha_1,\frac 12 +\beta_1;1+\beta_1-\alpha_1)}{\zeta_p(2-\alpha_1+\beta_1)} 
$$
equals 
$$  
 {\mathcal A}_p(\tfrac 12;\pi(\bsalpha),\pi(\bsbeta)) \Big(1-\frac 1p\Big) \Big( 1+ \frac 1{{\mathcal B}_p(\tfrac 12;\pi(\bsalpha),\pi(\bsbeta))} \Big(\frac 1p -\frac 1{p^2}-\frac 1{p^3}
\Big)\Big)?
$$ 
 A little calculation reduces this to checking whether 
 $$ 
 {\mathcal F}_p(\tfrac 12-\alpha_1,\tfrac 12+\beta_1;1+\beta_1-\alpha_1) \Big(1-\frac 1p\Big) 
 \frac{\zeta_p(1-\alpha_1+\beta_1)}{\zeta_p(2-\alpha_1+\beta_1)} 
 $$ 
 equals 
 $$
 {\mathcal B}_p (\tfrac 12;\pi(\bsalpha),\pi(\bsbeta)) + \frac 1p - \frac 1{p^2} -\frac {1}{p^3}? 
 $$ 
 With a little 
calculation, this may be verified upon using the following Parseval identities 
 \begin{align*}
&  \sum_{a, b\ge 0} \frac{\sigma(p^a;\bsalpha) \sigma(p^b;-\bsbeta)}{p^{a(1-\alpha_1)} 
  p^{b(1+\beta_1)} } p^{(1-\alpha_1+\beta_1) \min(a,b)} \\
&=\int_0^1 \Big(\sum_{a=0}^{\infty} \frac{\sigma(p^a;\bsalpha)e(a\theta)}{p^{a(1-\alpha_1-\beta_1)/2}} 
\Big) \Big(\sum_{b=0}^{\infty} \frac{\sigma(p^b;-\bsbeta)e(-b\theta)}{p^{b(1+\alpha_1+\beta_1)/2}}\Big) 
\Big( 1+\sum_{k=1}^{\infty} \frac{e(k\theta)}{p^{k(1-\alpha_1+\beta_1)/2}} + 
\sum_{\ell=1}^{\infty} \frac{e(-\ell \theta)}{p^{\ell(1-\alpha_1+\beta_1)/2}} \Big)d\theta,\\
\end{align*}
and 
$$ 
\sum_{k=0}^{\infty} \frac{\sigma(p^k;\bsalpha)\sigma(p^k;-\bsbeta)}{p^k}  
= \int_0^1 \prod_{j=1}^{3} \Big( 1- \frac{e(\theta)}{p^{\frac 12+\alpha_j}}\Big)^{-1} 
\Big(1- \frac{e(-\theta)}{p^{\frac 12-\beta_j}}\Big)^{-1}d\theta.
$$ 

\section{Completion of the proof of Theorem 1} 

\noindent Recall from \S 4 that our goal is to evaluate 
${\mathcal I}(\Psi,Q;\bsalpha,\bsbeta) = \Delta(\Psi,Q;\bsalpha,\bsbeta) + 
\Delta(\Psi,Q;\bsalpha,\bsbeta)$ (see \eqref{eqn:4}, \eqref{Delta} and 
\eqref{eqn:5}).   We then decomposed $\Delta(\Psi,Q;\bsalpha,\bsbeta) 
= {\mathcal D}(\Psi,Q;\bsalpha,\bsbeta) + {\mathcal S}(\Psi,Q;\bsalpha,\bsbeta) 
+ {\mathcal G}(\Psi,Q;\bsalpha,\bsbeta)$ (see \eqref{eqn:mainsum}).  
In Lemma \ref{diagonal} we evaluated ${\mathcal D}(\Psi,Q;\bsalpha,\bsbeta)$ obtaining 
one of the twenty terms in our desired asymptotic formula, together with an error term of $O(Q^{\frac 54+\epsilon})$.  

The work in \S 5 extracts a main term ${\mathcal {MS}}(\Psi,Q;\bsalpha,\bsbeta)$ 
out of ${\mathcal S}(\Psi, Q;\bsalpha,\bsbeta)$ with an error term of $O(Q^{2+\epsilon}/D)$.  
Correspondingly in \S 6 we extract a main term $\mathcal {MG}(\Psi,Q;\bsalpha,\bsbeta)$ 
out of ${\mathcal G}(\Psi,Q;\bsalpha,\bsbeta)$ with an error that is estimated in 
\S 8 as $O(Q^{\frac 74+\epsilon} D^{\frac 32})$.   The two main terms 
${\mathcal {MS}}$ and ${\mathcal {MG}}$ are combined in \S 9 to obtain a 
single term given in \eqref{eq:9.10} with an error $O(Q^{2+\epsilon}D + Q^{\frac 32+\epsilon}D)$.  
The choice $D= Q^{\frac 1{10}}$ minimizes our errors, and the total error is $O(Q^{\frac {19}{10}+\epsilon})$.

The main term of \eqref{eq:9.10} is evaluated in \S 10 by an involved residue 
calculation.  Nine terms in our desired asymptotic formula arise here, 
corresponding to the nine transpositions $\pi = (\alpha_j\beta_\ell)$.  
Thus $\Delta(\Psi,Q;\bsalpha,\bsbeta)$ leads to ten terms in our 
desired asymptotic formula, and the remaining ten come from the 
$\Delta(\Psi,Q;\bsbeta,\bsalpha)$ contribution.

Putting everything together we conclude that 
$$ 
H(0;\bsalpha,\bsbeta) \sum_q \sumflat_{\chi\pmod q} 
\Psi\Big(\frac q{Q}\Big) 
\int_{-\infty}^{\infty} \Lam(\chi;\bsalpha+it, \bsbeta+it) dt 
$$ 
equals 
$$ 
H(0;\bsalpha,\bsbeta) \sum_{q} \Psi\Big(\frac qQ\Big) \phi^{\flat}(q) 
\int_{-\infty}^{\infty} {\widetilde {\mathcal Q}} (q;\bsalpha+it,\bsbeta+it) dt + O(Q^{\frac{19}{10} +
\epsilon}). 
$$ 
If the $\alpha_j$ and $\beta_\ell$ are bounded away from each other, and 
from zero in such a way that $H(0;\bsalpha,\bsbeta) \gg Q^{-\epsilon}$ then 
we may divide both sides by $H(0;\bsalpha,\bsbeta)$ and obtain the 
Theorem in this case.   The general case follows from this by noting that 
the two expressions above must both be analytic in the variables $\alpha_j$ and 
$\beta_\ell$.


 \end{document}